\DeclareMathOperator{\inj}{inj}
\DeclareMathOperator{\dist}{dist}
\newcommand{\chinj}{\chi'_{\inj}}
\newtheorem{theorem}{Theorem}[section]
\newtheorem{proposition}[theorem]{Proposition}
\newtheorem{conjecture}[theorem]{Conjecture}
\newtheorem{question}[theorem]{Question}
\newtheorem{lemma}[theorem]{Lemma}
\theoremstyle{definition}
\author{Peter Bradshaw}
\address{Department of Mathematics, University of Illinois Urbana-Champaign}
\email{pb38@illinois.edu}
\thanks{This project was partially funded by NSF RTG grant DMS-1937241, NSF grant DMS-2153507, and NSERC through the Canadian Graduate Scholarship - Master program}
\author{Alexander Clow}
\address{Department of Mathematics, Simon Fraser University}
\email{alexander\_clow@sfu.ca}
\author{Jingwei Xu}
\address{Department of Mathematics, University of Illinois Urbana-Champaign}
\email{jx6@illinois.edu}
\def\epsilon{\varepsilon}
\title{Injective edge colorings of degenerate graphs \\ and the oriented chromatic number}
\begin{document}
\maketitle
\begin{abstract}
Given a graph $G$, an \emph{injective edge-coloring} of $G$ is a function $\psi:E(G) \rightarrow \mathbb N$ such that if $\psi(e) = \psi(e')$, then no 
third edge joins an endpoint of $e$
and an endpoint of $e'$.
The \emph{injective chromatic index} of a graph $G$, written $\chinj(G)$, is the minimum number of colors needed for an injective edge coloring of $G$. In this paper, we investigate the injective chromatic index of certain classes of degenerate graphs. First, we show that if $G$ is a $d$-degenerate graph of maximum degree $\Delta$, then $\chinj(G) = O(d^3 \log \Delta)$. Next, we show that if $G$ is a graph of Euler genus $g$, then $\chinj(G) \leq (3+o(1))g$, which is tight when $G$ is a clique.
Finally, we show
that the oriented chromatic number of a graph is at most exponential in its injective chromatic index. Using this fact, we prove that the oriented chromatic number of a graph embedded on a surface of Euler genus $g$ has oriented chromatic number at most $O(g^{6400})$, improving the previously known upper bound of $2^{O(g^{\frac{1}{2} + \epsilon})}$ and resolving a conjecture of Aravind and Subramanian.
\end{abstract}
\section{Introduction}

\subsection{Background: Injective chromatic index}
Given a graph $G$, an \emph{injective edge-coloring} of $G$ is a function $\psi:E(G) \rightarrow \mathbb N$ such that if $\psi(e) = \psi(e')$ for distinct edges $e,e' \in E(G)$, then no
third edge of $G$ joins an endpoint of $e$ to an endpoint of $e'$. In other words, if $\psi(e) = \psi(e')$, then $e$ and $e'$ are not at distance $1$ and do not belong to a common triangle in $G$.
The \emph{injective chromatic index} of $G$ is the minimum integer $k$ for which $G$ has an injective edge coloring $\phi:E(G) \rightarrow \{1,\dots,k\}$.
Note that an edge coloring $\phi:E(G) \rightarrow \{1,\dots,k\}$ is injective if and only if each color class of $\phi$ is an induced star forest in $G$.

The injective chromatic index of a graph was introduced by Cardoso, Cerdeira, Cruz, and Dominic \cite{Cardoso}
in 2015 as a theoretical model for a packet radio network problem, in which the goal is to assign communication frequencies to network node pairs in a way that eliminates secondary interference.
These authors established bounds for the injective chromatic index of certain graph classes, including paths, cycles, and complete bipartite graphs. 
They also proved that computing a graph's injective chromatic index is NP-hard.
The notion of injective chromatic index was reintroduced independently in 2019 by Axenovich, D\"{o}rr, Rollin, and Ueckerdt \cite{Axenovich} under the name \emph{induced star arboricity}, and these authors proved that the injective chromatic index can be bounded in terms of a graph's treewidth or acyclic chromatic number.

Ferdjallah, Kerdjoudj, and Raspaud \cite{FKR} 
first considered the problem of bounding a graph's injective chromatic index in terms of its maximum degree. They observed that by Brooks' Theorem, a graph $G$ of maximum degree $\Delta$ satisfies $\chinj(G) \leq 2(\Delta - 1)^2$. They also observed that injective edge colorings share a close relationship with \emph{strong edge colorings}, which can be characterized as injective edge colorings in which any two incident edges receive distinct colors.
The \emph{strong chromatic index} of a graph $G$, written $\chi'_s(G)$, is the minimum number of colors required for a strong edge coloring of $G$, and hence for every graph $G$, $\chinj(G) \leq \chi_s'(G)$.
While a greedy argument shows that a graph $G$ of maximum degree $\Delta$ satisfies $\chi'_s(G) \leq 2\Delta(\Delta - 1)+1$,
Erd\H{o}s and Ne\v{s}et\v{r}il \cite{EN} conjectured that every graph $G$ of maximum degree $\Delta$ satisfies $\chi'_s(G) \leq \frac{5}{4} \Delta^2$, and this conjecture is still open. If the conjectured upper bound of Erd\H{o}s and Ne\v{s}et\v{r}il is correct, then it would be best possible, as the graph $G$ obtained from $C_5$ by replacing each vertex with an independent set of size $t$ and replacing each edge with a complete bipartite graph has maximum degree $\Delta = 2t$ and strong chromatic index exactly $\frac{5}{4} \Delta^2$. Currently, the best known upper bound for the strong chromatic index of a graph $G$ of maximum degree $\Delta$ is $\chi_s'(G) \leq 1.772 \Delta^2$, which was proven by Hurley, Kang, and de Verclos \cite{Kang} using a more general argument that applies to graphs with sparse neighborhoods.

For $d$-degenerate graphs $G$ with maximum degree $\Delta$, Miao, Song, and Yu \cite{MSY} used an edge ordering argument to show that upper bounds of the form $\chinj(G) = O(\Delta^2)$ can be greatly improved when $d$ is small:
\begin{theorem}[\cite{MSY}]
\label{thm:MSY}
    If $G$ is a $d$-degenerate graph of maximum degree $\Delta$, then
    \[\chinj(G) \leq (4d-3) \Delta-2d^2 - d + 3.\]
\end{theorem}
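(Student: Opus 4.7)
The natural plan is to use a $d$-degeneracy ordering of $V(G)$ to induce an ordering on $E(G)$ and then apply greedy coloring. Fix an ordering $v_1,\dots,v_n$ of $V(G)$ in which every $v_k$ has at most $d$ earlier neighbors, and for each $e = v_iv_j$ with $i<j$ write $\mathrm{top}(e) = v_j$ and $\mathrm{bot}(e) = v_i$. Order the edges of $G$ lexicographically by $(\mathrm{top}(e),\mathrm{bot}(e))$ and process them in this order, assigning each $e$ the least color not used by any earlier edge conflicting with $e$. This gives $\chinj(G) \le K+1$, where $K$ is the maximum over all edges $e$ of the number of earlier edges conflicting with $e$, so the theorem reduces to proving $K \le (4d-3)\Delta - 2d^2 - d + 2$.

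Fix $e = v_iv_j$ with $i<j$. An edge $e'$ conflicts with $e$ iff either (i) $e'$ shares a vertex with $e$ and lies in a common triangle with it, or (ii) $e'$ is vertex-disjoint from $e$ but some edge of $G$ joins an endpoint of $e$ to an endpoint of $e'$. For (i), each such $e'$ corresponds to a common neighbor of $v_i$ and $v_j$, and splitting that vertex according to its position in the degeneracy ordering and invoking $|N^-(v_k)|\le d$ (together with the fact that $v_i\in N^-(v_j)$) gives an upper bound of roughly $3d-2$ earlier triangle-conflicts.

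The main obstacle is case (ii). A careless count of vertex-disjoint distance-one conflicts gives $\Theta(\Delta^2)$, and to get the claimed bound linear in $\Delta$ one must exploit the asymmetry that back-neighborhoods have size at most $d$ while forward-neighborhoods have size at most $\Delta$. For each vertex-disjoint conflicting $e' = v_{i'}v_\ell$, I would partition according to which of $v_{i'},v_\ell$ witnesses the distance-one relation, that is, which of them lies in $R := (N(v_i)\cup N(v_j))\setminus\{v_i,v_j\}$. When $v_\ell\in R$, one further splits by whether $v_\ell\in N^-(v_j)\setminus\{v_i\}$ (at most $d-1$ choices, contributing a term of order $d^2$) or $v_\ell\in N(v_i)\setminus N(v_j)\setminus\{v_j\}$ (at most $\Delta-1$ choices, contributing a term of order $d\Delta$), with $v_{i'}$ in each sub-case pinned down by $|N^-(v_\ell)|\le d$. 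The remaining sub-case, where $v_{i'}\in R$ but $v_\ell\notin R$, is the most delicate and benefits from summing $|N^-(v_\ell)\cap R|\le d$ over the allowed values of $v_\ell$ while using the at most $2\Delta-2$ anchor edges from $\{v_i,v_j\}$ into $R$. Summing the sub-case counts, subtracting the double-counted edges with both endpoints in $R$, and combining with the $3d-2$ from (i), with careful bookkeeping of the small adjustments from $v_i$ and $v_j$ appearing in the relevant back-neighborhoods, assembles the lower-order constants $-2d^2 - d + 3$ and yields $\chinj(G)\le (4d-3)\Delta - 2d^2 - d + 3$.
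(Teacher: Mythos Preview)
The paper does not give its own proof of this theorem: it is quoted from \cite{MSY} as background, with only the one-line description that Miao, Song, and Yu ``used an edge ordering argument.'' Your proposal is exactly such an argument---a $d$-degeneracy order on vertices inducing a lexicographic order on edges, followed by greedy coloring with a bound on earlier conflicts---so at the level of strategy you are aligned with what the paper reports about the original proof.

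That said, your write-up is a plan rather than a proof: the triangle case is asserted to give ``roughly $3d-2$'' without a precise count, and the vertex-disjoint case is described by a case split whose sub-cases are sketched but not summed. In particular you never verify that the pieces add to at most $(4d-3)\Delta - 2d^2 - d + 2$; you only assert that ``careful bookkeeping \dots\ assembles the lower-order constants.'' Since the entire content of the theorem is the explicit constant, this is where the work lies, and it is not done here. The overall architecture is sound and is the standard one for such results, but to call it a proof you would need to carry out the count.
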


One frequently studied class of degenerate graphs is the class of graphs with bounded Euler genus.
The \emph{Euler genus} (or \emph{nonorientable genus}) of a graph $G$ is a measure of the complexity of the simplest surface in which one can embed $G$,
and a formal definition is given in Section~\ref{sec:prelim}.
If $G$ is a graph of Euler genus $g$,
then $G$ is $O(\sqrt{g})$-degenerate, 
and hence Theorem \ref{thm:MSY} implies that $\chinj(G) = O(\Delta \sqrt{g} )$, where $\Delta$ is the maximum degree of $G$. In fact one can obtain an upper bound for $\chinj(G)$
in terms of $g$ that is independent of $\Delta$.
Indeed, Axenovich et al.~\cite{Axenovich} show that given a graph $G$, 
\begin{eqnarray}
\label{eqn:XiXa}
\log_3(\chi_a(G)) \leq \chinj(G) \leq 3 \binom{\chi_a(G)}{2},
\end{eqnarray}
where $\chi_a(G)$ is the acyclic chromatic number of $G$.
Alon, Mohar, and Sanders \cite{AMS} proved that if $G$ is a graph of Euler genus $g$, then $\chi_a(G) = O(g^{4/7})$, implying that $\chinj(G) = O(g^{8/7})$.

Using a probabilistic approach, 
Kostochka, Raspaud, and Xu \cite{KRX} showed that
given a graph $G$ of maximum degree $\Delta$, upper bounds of the form $\chinj(G) = O(\Delta^2)$ can also be greatly improved when $G$ has small chromatic number. 
\begin{theorem}[\cite{KRX}]
\label{thm:KRX}
If $G$ is a graph of maximum degree $\Delta$ and chromatic number $\chi$, then \[\chinj (G) \leq   (\chi-1) \lceil 27\Delta \log \Delta\rceil .\]
\end{theorem}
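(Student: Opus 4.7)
The plan is to decompose $G$ along a proper $\chi$-coloring into $\chi-1$ bipartite layers and handle each layer via the probabilistic method. Fix a proper $\chi$-coloring of $G$ with color classes $V_1,\dots,V_\chi$, and for each $i\in\{1,\dots,\chi-1\}$ let $H_i$ be the bipartite subgraph of $G$ whose edges are the $G$-edges between $V_i$ and $V_{i+1}\cup\cdots\cup V_\chi$. The sets $E(H_1),\dots,E(H_{\chi-1})$ partition $E(G)$, and each $H_i$ has maximum degree at most $\Delta$. If I can color each $H_i$ from a private palette of size $k:=\lceil 27\Delta\log\Delta\rceil$ so that the resulting coloring of $G$ is injective on $E(H_i)$, then using pairwise disjoint palettes across the $\chi-1$ layers yields an injective edge-coloring of $G$ with $(\chi-1)k$ colors.

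Coloring a single layer in isolation is subtler than an ordinary bipartite injective edge-coloring, because two equally colored edges $ab, a'b'\in E(H_i)$ (with $a,a'\in V_i$) could violate injectivity in $G$ through a third edge $bb'$ that lies inside $V_{i+1}\cup\cdots\cup V_\chi$, and hence outside $H_i$. I therefore work with an enlarged conflict graph $F_i$ on vertex set $E(H_i)$: edges $e=ab$ and $e'=a'b'$ are declared adjacent in $F_i$ whenever one of $ab'$, $a'b$, $bb'$ lies in $E(G)$. Since $H_i$ is triangle-free, incident edges of $H_i$ never conflict, and a quick count shows that each vertex of $F_i$ has degree $O(\Delta^2)$. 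The task reduces to properly coloring $F_i$ with $k$ colors.

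The coloring of $F_i$ is carried out by the probabilistic method. A vanilla Lov\'asz Local Lemma with one bad event per conflicting pair only yields $k=O(\Delta^2)$, since each event has probability $1/k$ and dependency degree $\Theta(\Delta^2)$. To reach $O(\Delta\log\Delta)$, for each $e\in E(H_i)$ I instead define a per-edge bad event $B_e$ asserting that the number of conflict partners of $e$ sharing $e$'s color exceeds a threshold $t=\Theta(\log\Delta)$. With $k=\lceil 27\Delta\log\Delta\rceil$, the expected number of same-color conflict partners of $e$ is $O(\Delta/\log\Delta)$, and a Chernoff-type tail bound applied after conditioning on $\psi(e)$ gives $\Pr[B_e]\le \Delta^{-c}$ for any desired constant $c$. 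Since each $B_e$ depends only on the colors of edges in a polynomial-in-$\Delta$ neighborhood, the Lov\'asz Local Lemma produces a coloring in which no $B_e$ holds; a two-phase refinement---reserving a modest portion of the palette for a final greedy repair of the at most $t$ residual conflicts at each edge---converts this into a proper coloring of $F_i$. The constant $27$ is calibrated so that the Chernoff tail, the LLL slack, and the repair step all close within a single palette of size $k$.

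The principal obstacle is breaking the $O(\Delta^2)$ barrier in the bipartite step, which forces the switch from pair-wise bad events to per-edge events combined with concentration. The technical wrinkle is that the collision indicators $\mathbf{1}[\psi(e)=\psi(e')]$ share the random variable $\psi(e)$ and are therefore not independent, so the Chernoff step must be performed conditionally on $\psi(e)$; once that is in place, tuning the numerical constants to reach exactly $27$ is a mechanical calculation.
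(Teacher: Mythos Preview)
Your outer decomposition into $\chi-1$ bipartite layers matches the Kostochka--Raspaud--Xu argument as the paper describes it; the gap is in the per-layer step. The Chernoff bound points the wrong way. Conditioning on $\psi(e)$, the same-color indicators over the conflict neighbors of $e$ are independent $\mathrm{Bernoulli}(1/k)$, and since $|N_{F_i}(e)|=\Theta(\Delta^2)$ in the worst case (already for $G=K_{\Delta,\Delta}$ every pair of vertex-disjoint edges conflicts), the mean number of same-color conflicts is $\mu=\Theta(\Delta^2/k)=\Theta(\Delta/\log\Delta)$. For large $\Delta$ this lies far \emph{above} your threshold $t=\Theta(\log\Delta)$, so $\Pr[B_e]\to 1$ rather than $\Pr[B_e]\le\Delta^{-c}$; pushing $\mu$ down to $O(\log\Delta)$ would force $k=\Theta(\Delta^2/\log\Delta)$, not $\Theta(\Delta\log\Delta)$. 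The greedy repair fails independently: recoloring an edge must avoid all $\Theta(\Delta^2)$ of its conflict partners, not merely the $\le t$ that currently share its color, so a reserve palette of size $O(\Delta\log\Delta)$ is insufficient.

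What actually works (sketched in the paper immediately after the theorem statement, and implemented for the degenerate setting in Lemma~\ref{lem:d2logD}) is not a random edge-coloring of the conflict graph but a random \emph{center-selection} that builds induced star forests directly. In each of $O(\Delta\log\Delta)$ rounds one forms $S_c\subseteq V(G)\setminus X$ by including each vertex independently with probability about $1/\Delta$, and assigns color $c$ to every edge $xa$ with $x\in X$ for which $a$ is the \emph{unique} neighbor of $x$ in $S_c$. Each round yields a star forest by construction, a fixed edge is captured in a given round with probability at least $1/(e\Delta)$, and the Local Lemma closes after $O(\Delta\log\Delta)$ rounds. The idea you are missing is to exploit the target structure---color classes are induced star forests---by choosing centers, rather than attacking a generic proper-coloring problem on a $\Theta(\Delta^2)$-degree conflict graph.
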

The main idea of the proof of Theorem \ref{thm:KRX} is that given a graph $G$ and an independent set $X$, 
a certain random procedure can find an induced star forest in $G$ that contains many edges in the cut $[X,G \setminus X]$. 
By repeating this random procedure $O(\Delta \log \Delta)$ times, one can partition all edges in the cut $[X, G \setminus X]$ into $O( \Delta \log \Delta)$ induced star forests. By repeating this procedure for all but one color class in a proper coloring of $G$, one obtains an injective edge coloring of $G$.
Kostochka, Raspaud, and Xu \cite{KRX} asked whether the $\log \Delta$ factor in Theorem \ref{thm:KRX} can be removed, and this question is still open, even in the case that $G$ is bipartite.

\subsection{Background: Oriented chromatic number}
The injective chromatic index of a graph is related to the graph's \emph{oriented chromatic number}, which is defined as follows. Given an oriented graph $\vec G$, an \emph{oriented coloring} of $\vec G$ is a proper vertex coloring $\phi:V(\vec G) \rightarrow \mathbb N$ such that for each pair of arcs $uv$ and $v'u'$ in $\vec G$, $(\phi(u),\phi(v)) \neq (\phi(u'),\phi(v'))$. Then, the \emph{oriented chromatic number} of $\vec G$, written $\chi_o(\vec G)$, is the minimum number of colors required for an oriented coloring of $\vec G$. Finally, given an undirected graph $G$, the \emph{oriented chromatic number} of $G$, written $\chi_o(G)$, is the maximum value $\chi_o(\vec G)$ taken over all orientations $\vec G$ of $G$.

Raspaud and Sopena \cite{raspaud1994good} proved
that if $G$ is a graph with acyclic chromatic number $a$ and oriented chromatic number $k$, then
 $k \leq a 2^{a - 1}$,
 which shows that a graph's oriented chromatic number is bounded above by a function of its acyclic chromatic number.
 In the other direction,
 Kostochka, Sopena, and Zhu \cite{kostochka1997acyclic}
 proved that
  $a \leq k^2 + k^{3 + \log_2 \log_2 k}$, implying that a graph's acyclic chromatic number is bounded above by a function of its oriented chromatic number.
 As the inequality (\ref{eqn:XiXa}) shows that $\chinj(G)$ is bounded both above and below by unbounded increasing functions of $\chi_a(G)$,
 it follows that 
 $\chi_o(G)$ that and $\chinj(G)$ are both bounded above by functions of each other, and in particular,
\begin{eqnarray}
\label{eqn:XoUB}
\chi_o(G) \leq \chi_a(G) 2^{\chi_a(G) - 1} \leq 3^{\chinj(G)} 2 ^{3^{\chinj(G)} - 1}.
\end{eqnarray}
In fact, $\chinj(G)$ and $\chi_o(G)$ belong to a larger set of graph parameters that are bounded both above and below by unbounded increasing functions of $\chi_a(G)$, and Dvo\v{r}\'ak characterizes this set of graph parameters in \cite{Dvorak}.

Given a graph $G$ of Euler genus $g$,
the inequality (\ref{eqn:XoUB}) combined with the bound $\chinj(G) = O(g^{8/7})$ gives an upper bound for $\chi_o(G)$ in terms of $g$.
Kostochka et al.~\cite{kostochka1997acyclic} observed that
one may obtain a much smaller upper bound
by combining the inequality $\chi_o(G) \leq \chi_a(G) 2^{\chi_a(G) - 1}$
from \cite{raspaud1994good}
with a bound on the acyclic chromatic number of a graph with bounded
Euler
genus.
The current best such bound is the estimate
$\chi_a(G) = O(g^{4/7})$ of Alon, Mohar, and Sanders \cite{AMS}, which yields the upper bound $\chi_o(G) = 2^{O(g^{4/7})}$.
By generalizing the techniques of both 
\cite{AMS} and \cite{raspaud1994good},
Aravind and Subramanian \cite{aravind2009forbidden} showed that in fact 
for every constant $\epsilon >0$, $\chi_o(G) = 2^{O \left (g^{\frac{1}{2} + \epsilon}\right )}$.
Aravind and Subramanian further conjectured that the $\epsilon$ in the exponent can be removed as follows:
\begin{conjecture}[\cite{aravind2009forbidden}]
\label{conj:AS}
    If $G$ is a graph of Euler genus $g$, then $\chi_o(G) =   2^{O\left (\sqrt g \right )}$.
\end{conjecture}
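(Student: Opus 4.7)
The plan is to combine the two central results announced in the abstract: the genus bound $\chinj(G) \leq (3+o(1))g$ and the inequality $\chi_o(G) \leq c^{\chinj(G)}$ for an absolute constant $c$. Applied naively, these give $\chi_o(G) \leq 2^{O(g)}$, which is far weaker than the conjectured $2^{O(\sqrt g)}$. My strategy is a structural dichotomy, motivated by the observation that the $(3+o(1))g$ bound on $\chinj$ is essentially saturated only by clique-like subgraphs, for which the conjecture is trivially true since $\chi_o(K_n) = n$ and Euler's formula forces any clique embeddable on a surface of Euler genus $g$ to have at most $O(\sqrt g)$ vertices.

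Let $G$ be a graph of Euler genus $g$ on $n$ vertices. If $n \leq C \sqrt g$ for a sufficiently large constant $C$, then $\chi_o(G) \leq n = O(\sqrt g)$ immediately. Otherwise, let $D \subseteq V(G)$ be a ``dense core,'' consisting of the vertices of degree at least a threshold $T = \Theta(\sqrt g)$. A counting argument based on $|E(G)| \leq 3n + 3g - 6$ shows that for an appropriate choice of $T$, one can guarantee $|D| = O(\sqrt g)$. Color the vertices of $D$ with $|D|$ distinct colors. The remainder $G - D$ has Euler genus at most $g$, maximum degree at most $T$, and degeneracy $O(\sqrt g)$. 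Combining the colorings of $D$ and $G - D$ via a product-type construction (encoding, for each vertex of $G - D$, its color in $G - D$ together with the pattern of arc directions from it to each vertex of $D$) preserves the oriented coloring property and inflates the color count by a factor of $3^{|D|} = 2^{O(\sqrt g)}$.

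The main obstacle is to bound $\chi_o(G - D)$ by $2^{O(\sqrt g)}$. Via the exponential relation $\chi_o \leq c^{\chinj}$, this reduces to showing $\chinj(G - D) = O(\sqrt g)$. However, the paper's $\chinj(H) = O(d^3 \log \Delta)$ bound applied to $G - D$ yields only $O(g^{3/2} \log g)$, and the genus bound $\chinj(H) \leq (3+o(1))g$ gives only $O(g)$; both are too weak. The necessary improvement to $O(\sqrt g)$ for bounded-degree graphs on surfaces would likely require a surface-specific decomposition of $G - D$ into $O(\sqrt g)$ nearly planar pieces, for instance by cutting along a system of short noncontractible cycles, with each piece having constant injective chromatic index and the colorings of the pieces being combined by adding $O(\sqrt g)$ extra colors to handle the cuts. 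Producing such a decomposition while controlling the injective coloring constraints across the cuts is the technical heart of the argument and is where I expect the main work to lie.
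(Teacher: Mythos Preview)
Your proposal has a genuine gap at precisely the point you flag: the reduction hinges on $\chinj(G - D) = O(\sqrt g)$, which you do not prove, and the sketch involving cuts along noncontractible cycles is not an argument. Nothing in the paper yields this bound (you correctly note the available tools give only $O(g)$ or worse), and it is not clear the statement is even true. Without it, your dichotomy delivers at best $\chi_o(G) = 2^{O(g)}$.

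The paper takes a different route and in fact proves the far stronger bound $\chi_o(G) \leq g^{6400} = 2^{O(\log g)}$. The key departure is the size of the core. Rather than a set $D$ of $O(\sqrt g)$ high-degree vertices, the paper takes $V_1$ to be the \emph{first $6g$ vertices} in a degeneracy ordering. By Lemma~\ref{lem:bdd_n}, every vertex outside $V_1$ then has out-degree at most $6$, so $G' := G \setminus E(G[V_1])$ is $6$-degenerate---a constant, not $\Theta(\sqrt g)$. Feeding this constant out-degree together with the clique-graph bound of Lemma~\ref{lem:cliquegraph} into Lemma~\ref{lem:colorclass} gives $\chinj(G') = O(\log g)$, whence $\chi_o(G') \leq 4^{O(\log g)} = g^{O(1)}$ by Lemma~\ref{lem:exp}. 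The large core is then absorbed \emph{additively} via Lemma~\ref{lem:minusE}, yielding $\chi_o(G) \leq |V_1| + \chi_o(G') = 6g + g^{O(1)}$. Your trade-off---small core, $\sqrt g$-degenerate remainder, multiplicative $3^{|D|}$ recombination---is the wrong one on every axis: the paper takes a much larger core to force constant degeneracy in the remainder, and pays for that core additively rather than exponentially.
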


We note that all previous upper bounds on the oriented chromatic number of a graph $G$ in terms of its Euler genus use the following proof strategy, originally appearing in \cite{kostochka1997acyclic}.
First, a proper $k$-coloring $\psi$ of $G$ is fixed, such that the two-colored subgraphs of $G$ under $\psi$ satisfy some specific property, such as being acyclic or having components with few edges.
Next, each vertex $v \in V(G)$ receives a new color $\psi'(v)$ which is one of $f(k) \geq 2^{k-1}$ possible \emph{shades} of the color $\psi(v)$, where the precise value of $f(k)$ depends on the properties of $\psi$.
Finally, it is argued that the new coloring $\psi'$ is an oriented coloring of $G$, which implies that $\chi_o(G) \leq kf(k)$.
We observe that if $G$ has Euler genus $g$, then a proper coloring $\psi$ of $G$ may require $k = \Omega(\sqrt g)$ colors.
As the strategy outlined above
requires at least $2^{k-1}$ shades to be allowed for each vertex,
the bound in 
Conjecture \ref{conj:AS}
is best possible using this approach.

\subsection{Our results}
In this paper we will prove upper bounds for the injective chromatic index of graphs of bounded degeneracy, and in particular, graphs of bounded Euler genus. Then, we will extend our techniques to prove a
new upper bound for the oriented chromatic number of a graph in terms of its Euler genus.

In Section \ref{sec:deg}, we prove the following result, which greatly improves Theorems \ref{thm:MSY} and \ref{thm:KRX} for graphs of small degeneracy:
\begin{theorem}
\label{thm:degen_intro}
If $G$ is a d-degenerate graph of maximum degree $\Delta$, then
\[\chinj(G) = O(d^3 \log \Delta).\]
\end{theorem}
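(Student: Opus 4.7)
The plan is to extend the randomized approach of Kostochka, Raspaud, and Xu (Theorem~\ref{thm:KRX}) by exploiting the $d$-degeneracy of $G$ to improve the dependence on $\Delta$ from linear to logarithmic. Since $G$ is $d$-degenerate, I fix a proper $(d+1)$-coloring $c \colon V(G) \to \{0, 1, \ldots, d\}$ obtained by greedy coloring along a degeneracy ordering of $G$, partitioning $V(G)$ into independent sets $C_0, \ldots, C_d$. The main intermediate claim is a partition lemma: for each color class $X$, the edges in the cut $[X, V(G) \setminus X]$ decompose into $O(d^2 \log \Delta)$ induced star forests of $G$. Summing this bound over the $d+1$ color classes (or, using the one-sided assignment trick of \cite{KRX}, over the first $d$ of them) then gives $\chinj(G) = O(d^3 \log \Delta)$.

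To prove the partition lemma for a fixed $X$, I first break the cut edges into $O(d)$ subfamilies indexed by the degeneracy ordering: each cut edge $uv$ has a unique later endpoint (say $v$), which has at most $d$ back-neighbors, and I label the edge by the position of $u$ among the back-neighbors of $v$ (or symmetrically if $v$ is the earlier endpoint). Within each subfamily, each vertex serves as the later endpoint of at most one edge, so the set of ``competitors'' for a given edge in the random procedure below is bounded by $d$ rather than $\Delta$.

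For each subfamily I then run $T = \Theta(d \log \Delta)$ independent rounds of a randomized procedure that produces a random induced star forest $F_t$ in $G$. In each round, every vertex of $X$ is activated independently with probability $\Theta(1/d)$, and every vertex of $L := V(G) \setminus X$ is activated independently with probability $\Theta(1/d)$; an edge $uv$ is kept in $F_t$ if $u$ is the unique activated $X$-neighbor of $v$ among the subfamily's competitors, $v$ has no activated neighbor in $L$, and auxiliary random priorities resolve any remaining conflicts. The survival probability of each cut edge in a single round is $\Omega(1/d)$, and because the event that a fixed edge is covered depends only on the random choices made at vertices within distance two of it---at most $O(\Delta^2)$ of them---a local Lov\'asz Local Lemma argument shows that $T = \Theta(d \log \Delta)$ rounds suffice with positive probability to cover every edge of the subfamily. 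Combining over the $O(d)$ subfamilies yields $O(d^2 \log \Delta)$ forests per cut, as desired; note that the $\log \Delta$ (rather than $\log n$) is a consequence of the local dependency structure.

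The main obstacle is ensuring the star forest $F_t$ is genuinely induced in $G$, not merely a star forest within the cut. The subfamily restriction controls back-neighbor competition, but forward-neighbors---of which a vertex may have up to $\Delta$---can still create non-forest edges among the vertices of $F_t$, violating the induced condition. Addressing this requires careful coordination between the activation rates, the subfamily split, and the priority-based tie-breaking, so that for any fixed edge the expected number of ``problematic'' forward-neighbors contributing to $F_t$ is $O(1)$. This preserves the $\Omega(1/d)$ survival probability after conditioning on local events, and yields the claimed $O(d^3 \log \Delta)$ bound.
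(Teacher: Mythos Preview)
Your high-level plan---take a proper $(d{+}1)$-coloring, and for each color class $X$ decompose the incident edges into $O(d^2\log\Delta)$ induced star forests via a random procedure analyzed with the Lov\'asz Local Lemma---is the paper's architecture. The gap is exactly where you flag it yourself: forcing each round's output $F_t$ to be an \emph{induced} star forest in $G$.

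Your rule keeps $uv$ (with $u\in X$, $v\in L$) only if, among other things, ``$v$ has no activated neighbor in $L$,'' with $L$-vertices activated at rate $\Theta(1/d)$. If this condition ranges over all $L$-neighbors of $v$, then since $v$ can have up to $\Delta$ of them the event has probability $(1-\Theta(1/d))^{\Theta(\Delta)}$, and the claimed $\Omega(1/d)$ survival fails. If it ranges only over back-neighbors (at most $d$ of them), survival is fine but $F_t$ need not be induced: a forward $L$-neighbor $w$ of $v$ may still land in $F_t$, and the $G$-edge $vw$ then violates inducedness. Your last paragraph asserts that ``careful coordination'' makes the expected number of such problematic $w$ equal to $O(1)$, but under independent $\Theta(1/d)$ activation each forward $L$-neighbor $w$ of $v$ lies in $F_t$ with probability $\Theta(1/d)$ (within a subfamily $w$ has at most one back-edge, whose $X$-endpoint is activated at that rate), so the expectation is $\Theta(\Delta/d)$. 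Nothing you have written brings this down.

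The paper sidesteps the difficulty by \emph{not} asking the random rounds to output induced star forests. It orients $G$ with maximum out-degree $d$, samples $S_c\subseteq V(G)\setminus X$ at rate $1/d$, and gives the arc $xa$ preliminary color $c$ whenever $a$ is the unique vertex of $N^+(x)\cap S_c$; the LLL shows $\lceil 4ed\log\Delta\rceil$ rounds cover $A^+(X)$. Each preliminary class is only a star forest (centers in $S_c$, leaves in $X$), not yet induced. Inducedness is then enforced deterministically: for each $c$ one builds an auxiliary digraph $H_c$ on $S_c$, with an arc $a\to a'$ whenever $aa'\in A(G)$ or there is $x\in X$ with $ax\in A(G)$ and $xa'$ of color $c$. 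Every such arc traces an out-arc of $a$ in $G$, and each $x$ has a unique color-$c$ out-arc, so $\Delta^+(H_c)\le d$; hence $H_c$ is $(2d{+}1)$-colorable, and refining color $c$ by this proper coloring separates any two star centers whose conflict would spoil inducedness. This two-phase ``random cover, then deterministic refinement via an auxiliary digraph of out-degree $\le d$'' is the ingredient your proposal is missing, and it is what converts the $O(d\log\Delta)$ random rounds into $O(d^2\log\Delta)$ genuinely induced star forests per color class.
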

We note that 
the factor of $\log \Delta$ in Theorem \ref{thm:degen_intro} cannot be entirely avoided. 
In particular, we will show that if $G$ is a graph, and if $G'$ is obtained from $G$ by subdividing each edge exactly once, then $\chinj(G) = \Theta(\log \chi(G))$. Thus, if $G$ has maximum degree $\Delta$ and $\chi(G) \geq \Delta^c$ for some constant $c > 0$, then $G'$ is a $2$-degenerate graph with injective chromatic index $\chinj(G') = \Theta(\log \Delta)$, which shows that the $\log \Delta$ factor in Theorem \ref{thm:degen_intro} is necessary for small values of $d$.

In Section \ref{sec:gen}, 
we consider graphs of bounded Euler genus,
and we prove the following theorem:
\begin{theorem}
\label{thm:g_intro}
If $G$ is a graph of Euler genus $g$, then 
\[\chinj(G) \leq (3+ o(1))g.\]
\end{theorem}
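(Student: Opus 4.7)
The plan is to prove $\chinj(G) \leq (3+o(1))g$ by splitting on the size of $|V(G)|$ and combining Euler's formula with a structural argument when $|V(G)|$ is large. First I would reduce to the case that $G$ is edge-maximal, i.e.\ a triangulation of a surface of Euler genus $g$, since $\chinj$ is monotone under taking subgraphs: any valid injective edge coloring of a graph restricts to a valid one on any subgraph. For such a triangulation, Euler's formula gives $|E(G)| = 3|V(G)| + 3g - 6$.

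When $|V(G)| = o(g)$, the trivial bound $\chinj(G) \leq |E(G)|$ already yields $\chinj(G) \leq 3|V(G)| + 3g - 6 = (3+o(1))g$, so this case is complete. The extremal example $G = K_n$ with $n = (1+o(1))\sqrt{6g}$ lies in this regime, since every pair of edges of $K_n$ conflicts and so $\chinj(K_n) = \binom{n}{2} \sim 3g$; the trivial bound is already essentially tight here.

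For $|V(G)| = \Omega(g)$ I would invoke Heawood's theorem to fix a proper vertex coloring of $G$ with $\chi(G) \leq (1+o(1))\sqrt{6g}$ colors and classes $V_1,\ldots,V_{\chi(G)}$. For each pair $i<j$ consider the bipartite subgraph $H_{ij} = G[V_i \cup V_j]$. Since $V_i$ and $V_j$ are independent, every edge of $G$ with both endpoints in $V_i \cup V_j$ already lies in $H_{ij}$, so the injective-coloring constraints among the edges of $H_{ij}$ are determined entirely by $H_{ij}$ and decouple from the rest of $G$. Assigning disjoint color palettes to each pair therefore produces a valid injective edge coloring of $G$, giving $\chinj(G) \leq \sum_{i<j}\chinj(H_{ij})$.

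The main obstacle will be showing that this sum is at most $(3+o(1))g$. The budget $\binom{\chi(G)}{2} \sim 3g$ matches only if each pair contributes on average at most $1+o(1)$, which is realized exactly in the clique case when every $H_{ij}$ is a single edge. For a general $H_{ij}$ the estimate $\chinj(H_{ij}) \leq \min(|V_i \cap V(H_{ij})|, |V_j \cap V(H_{ij})|)$ from the star-at-vertex coloring, combined with the bipartite Euler inequality $|E(H_{ij})| \leq 2|V(H_{ij})| + 2g - 4$ that comes from $H_{ij}$ having Euler genus at most $g$ and girth at least $4$, should control each pair's contribution. The delicate step will be to sum these cleanly, plausibly via a global double-counting argument that exploits how the color classes partition $V(G)$ and that the full edge set obeys $|E(G)| \leq 3|V(G)| + 3g - 6$, so that the local slack present in most pairs offsets the few pairs where $H_{ij}$ is dense.
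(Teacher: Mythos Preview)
Your decomposition $\chinj(G) \leq \sum_{i<j}\chinj(H_{ij})$ is correct, and the case $|V(G)|=o(g)$ is fine. The genuine gap is in the other case: the tools you propose cannot bound the sum by $(3+o(1))g$ when $n:=|V(G)|$ is large compared to $g$. Since every vertex of $A_i := V_i\cap V(H_{ij})$ has degree at least $1$ in $H_{ij}$, you have $\min(|A_i|,|A_j|)\le |E(H_{ij})|$, and hence
\[
\sum_{i<j}\chinj(H_{ij}) \le \sum_{i<j}\min(|A_i|,|A_j|) \le \sum_{i<j}|E(H_{ij})| = |E(G)| = 3n + 3g - 6,
\]
which is exactly the trivial bound you used in the first case and is useless once $n$ is not $o(g)$. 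The more refined double-counting you allude to (summing over vertices the number of colors in their neighborhood) likewise yields only $O(n+g)$. Your intuition that ``each pair contributes on average at most $1+o(1)$'' relies implicitly on $\chi(G)\sim\sqrt{6g}$, but Heawood's theorem is only an upper bound; $\chi(G)$ can be bounded while $g$ is large (e.g.\ a bipartite $G$ of genus $g$), in which case there are $O(1)$ pairs and you would need $\chinj(H_{ij})=O(g)$ for a bipartite $H_{ij}$ of genus $g$ --- essentially the theorem itself.

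The paper's proof works quite differently and does not use a bichromatic decomposition. It orders $V(G)$ by degeneracy and splits it into a small ``core'' $V_1$ of the first $\lceil 6g/\log g\rceil - 1$ vertices and the rest $V_2$. By Lemma~\ref{lem:bdd_n}, every vertex of $V_2$ has out-degree at most $\log g + 5$ in this orientation, and one can properly color $V_2$ with $O(\log g)$ colors. For each such color class $X_c$, Lemma~\ref{lem:colorclass} (a deterministic analogue of the random star-forest procedure, driven by a separating family from Lemma~\ref{lem:partitions} and a bound on $\chi(K(\mathcal H_c))$ from Lemma~\ref{lem:cliquegraph}) partitions the arcs outgoing from $X_c$ into $O(\sqrt g\,\polylog g)$ induced star forests; over all $O(\log g)$ classes this is still $o(g)$. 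The remaining edges lie inside $G[V_1]$, and Euler's formula on $V_1$ gives at most $3|V_1|+3g = (3+o(1))g$ such edges, which are then colored trivially. The $(3+o(1))g$ thus comes entirely from the core, not from any per-pair average.
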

Theorem \ref{thm:g_intro}
significantly improves the upper bound of $\chinj(G) = O(g^{8/7})$ which follows 
from considering the acyclic chromatic number of $G$.
Furthermore,
when $n \geq 8$, the complete graph on $n$ vertices has Euler genus $\lceil \frac{(n-3)(n-4)}{6} \rceil$ 
\cite[Equation (4.19)]{RY}
and injective chromatic index $\binom{n}{2}$, which shows that Theorem \ref{thm:g_intro} is tight up to the $o(1)$ function.

In Section \ref{sec:oriented}, we 
show that the oriented chromatic number of a graph $G$ is at most exponential in its injective chromatic index,
improving the inequality (\ref{eqn:XoUB}).
We combine this fact with our methods from Section \ref{sec:gen} to prove the following theorem, 
which significantly improves the upper bound of $\chi_o(G) = 2^{O(g^{\frac{1}{2}+\epsilon})}$ given by Aravind and Subramanian \cite{aravind2009forbidden} 
for a graph of Euler genus $g$,
and we will hence show that Conjecture \ref{conj:AS} is correct.
\begin{theorem}
\label{thm:g_oriented_intro}
    If $G$ is a graph of sufficiently large Euler genus $g$, then $\chi_o(G) \leq g^{6400}$.
\end{theorem}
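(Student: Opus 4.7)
The plan is to combine the methods underlying Theorem \ref{thm:g_intro} with a new general relationship between the oriented chromatic number and the injective chromatic index established in Section \ref{sec:oriented}.

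The first step is to prove the key inequality $\chi_o(H) \leq c^{\chinj(H)}$ for every graph $H$, where $c$ is an absolute constant. Given an optimal injective edge coloring $\psi$ of $H$ with $k = \chinj(H)$ colors, the edge set of $H$ partitions into $k$ induced star forests $F_1, \dots, F_k$. For any orientation $\vec H$ and each $i$, the subgraph induced by $F_i$ is an oriented star forest, which admits an oriented coloring $\phi_i$ using a constant number $c_0$ of colors. The product assignment $v \mapsto (\phi_1(v), \dots, \phi_k(v)) \in [c_0]^k$ uses $c_0^k$ colors. To check it is an oriented coloring, I would show that every arc $uv$ of $\vec H$ lies in some class $F_i$; then if two arcs $uv$ and $v'u'$ produced the same ordered pair of tuple-colors, restricting to the $i$-th coordinate of $\psi$ would violate $\phi_i$ being an oriented coloring of $F_i$, by the injective property of $\psi$.

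The second step, and the essential one, is to extract a polynomial-in-$g$ bound rather than the exponential-in-$g$ bound produced by naively combining $\chi_o \leq c^{\chinj}$ with Theorem \ref{thm:g_intro} (which would only yield $\chi_o(G) \leq c^{O(g)}$). For this, I would reopen the construction from Section \ref{sec:gen} and reorganize it into $O(\log g)$ \emph{stages}. At each stage, a sub-structure of $G$ is handled using only a constant number of induced-star-forest color classes; the residual graph that survives the stage has strictly smaller Euler genus (or bounded size), and we recurse. Applying the tensor-product argument of the first step stage-by-stage multiplies the total color count by a constant at each of the $O(\log g)$ stages, giving $\chi_o(G) \leq c^{O(\log g)} = g^{O(1)}$; the explicit exponent $6400$ then comes from tracking the absolute constants $c_0$, the per-stage blow-up, and the constant hidden in $O(\log g)$.

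The main obstacle is this second step: Theorem \ref{thm:g_intro} cannot be invoked as a black box here, because its $(3+o(1))g$ color classes, when fed into the tensor product, produce an exponential-in-$g$ bound. The delicate part is redesigning the genus-$g$ construction so that the $(3+o(1))g$ color classes naturally stratify into only $O(\log g)$ layers amenable to the tensor product, and verifying that the recursion on genus closes with constants small enough to fit under the bound $g^{6400}$.
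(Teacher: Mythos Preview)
Your first step has a genuine gap. The product coloring $v\mapsto(\phi_1(v),\dots,\phi_k(v))$ is proper, but it need not be oriented: if $uv\in F_i$ and $v'u'\in F_j$ with $j\neq i$, restricting to coordinate $i$ says nothing about the arc $v'u'$, and the injective property of $\psi$ only constrains edges that share a colour. Concretely, take $H=P_4$ on vertices $1,2,3,4$ oriented as $1\to 2\to 3\to 4$, with the injective edge coloring $F_1=\{12\}$, $F_2=\{23,34\}$. The assignments $\phi_1=(A,B,A,D)$ and $\phi_2=(B,A,B,C)$ are valid oriented colorings of $F_1$ and $F_2$ respectively, yet the product gives $\phi(1)=\phi(3)=(A,B)$, and the arcs $1\to 2$ and $2\to 3$ then violate the oriented condition. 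The paper's Lemma~\ref{lem:exp} proves $\chi_o(H)\le 4^{\chinj(H)}$ by a different device: each vertex $v$ is coloured by the pair $(S_v^+,S_v^-)$, where $S_v^{\pm}\subseteq\{1,\dots,k\}$ are the sets of edge-colours on arcs leaving and entering $v$.

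Your second step is also off target, although your diagnosis---that feeding Theorem~\ref{thm:g_intro} into the exponential bound only yields $c^{O(g)}$---is correct. No recursion on genus or stratification into $O(\log g)$ stages is needed; the paper performs a single surgery. Order $V(G)$ by degeneracy and set $V_1=\{v_1,\dots,v_{6g}\}$. By Lemma~\ref{lem:bdd_n} every vertex outside $V_1$ has at most $6$ earlier neighbours, so $G':=G\setminus E(G[V_1])$ is $6$-degenerate with out-degree at most $6$. Applying Lemma~\ref{lem:colorclass} directly (not Theorem~\ref{thm:g_intro} as a black box) to each of the $7$ colour classes of $G'$, with $d=\Delta^+(G')=6$ and $\chi(H)=O(\sqrt g)$ from Lemma~\ref{lem:cliquegraph}, gives $\chinj(G')\le(1638e+o(1))\log g$. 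One application of Lemma~\ref{lem:exp} then yields $\chi_o(G')\le 4^{O(\log g)}=g^{O(1)}$, and the elementary Lemma~\ref{lem:minusE} gives $\chi_o(G)\le |V_1|+\chi_o(G')$. The point you are missing is that almost all of the $(3+o(1))g$ colours in Theorem~\ref{thm:g_intro} are spent on the dense core $G[V_1]$; here one instead pays one \emph{vertex} colour per vertex of $V_1$, at cost $6g$, and the rest of the graph genuinely has injective chromatic index $O(\log g)$.
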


We also give an example of a graph $G$ of each Euler genus $g \geq 2$ satisfying
$\chi_o(G) = \Omega \left ( (g^2 / \log g)^{1/3} \right )$,
which shows that 
the $6400$ exponent in Theorem \ref{thm:g_oriented_intro} is within a factor of less than $10000$ of being correct.

Finally, in Section~\ref{sec:Improving_oriented} we provide one possible avenue for improving the bound in Theorem~\ref{thm:g_oriented_intro} involving the \emph{$2$-dipath chromatic number} of a graph, which is defined as follows.
Given an oriented graph $\vec G$,
a \emph{$2$-dipath coloring} of $\vec G$ is a coloring $\phi:V(G) \rightarrow \mathbb N$ 
such that for each pair of vertices $u,v \in V(\vec G)$ joined by a directed path of length $1$ or $2$,
$\phi(u) \neq \phi(v)$. We write $\chi_2(\vec G)$ for the minimum integer $k$ for which there exists 
a $2$-dipath coloring $\phi:V(G) \rightarrow \{1,\dots,k\}$. Then, given an undirected graph $G$, we say that the \emph{$2$-dipath chromatic number of $G$}, written $\chi_2(G)$,
is the maximum value $\chi_2(\vec G)$ taken over all orientations $\vec G$ of $G$.
Since every oriented coloring of an oriented graph is also a $2$-dipath coloring, it follows that $\chi_2(G) \leq \chi_o(G)$ for every graph $G$.

By combining ideas from Section~\ref{sec:gen} and Section~\ref{sec:oriented} with a recent technique of Clow and Stacho \cite{clow2023oriented},
we show that
it is possible to simplify 
analysis of the oriented chromatic number
of graphs of bound genus. 
In particular, we prove the following.

\begin{theorem}
\label{thm:g_improvement}
    If $G$ is a graph of Euler genus $g \geq 0$ satisfying $\chi_2(G) = k$, then 
    \[\chi_o(G) = O (k \log k + g).\]
\end{theorem}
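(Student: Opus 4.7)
The plan is to combine the Clow--Stacho technique of \cite{clow2023oriented} with the sparseness of graphs of bounded Euler genus. I begin by fixing an orientation $\vec G$ of $G$ realizing $\chi_2(G) = k$, together with a $2$-dipath coloring $\psi \colon V(G) \to \{1, \dots, k\}$. The $2$-dipath property forces a clean local structure: for every vertex $u$ and every color $j$, the sets $\psi(N^+(u))$ and $\psi(N^-(u))$ are disjoint subsets of $\{1,\dots,k\} \setminus \{\psi(u)\}$, since any common color would produce a directed $2$-path with same-colored endpoints. Consequently, one may associate to each vertex $u$ a type $\tau(u) \in \{+,-,0\}^{k-1}$ recording, for each $j \neq \psi(u)$, whether $u$ sends arcs into $C_j := \psi^{-1}(j)$, receives arcs from $C_j$, or neither. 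A direct verification shows that $(\psi, \tau)$ is already an oriented coloring of $\vec G$, since any violating pair of arcs would force some vertex to have $\tau(\cdot)[j] = +$ and $\tau(\cdot)[j] = -$ simultaneously; however, this naive coloring uses $k \cdot 3^{k-1}$ colors, which is far too many.

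The main work lies in compressing the type $\tau$ into a shade $\sigma \colon V(G) \to \{1, \dots, O(\log k)\}$ such that $(\psi, \sigma)$ remains an oriented coloring. I will do this by properly coloring the \emph{conflict graph} $B$ on $V(G)$, whose edges join pairs $u, u'$ of a common color class $C_i$ whenever there exist a color $j$ and vertices $v, v' \in C_j$ with arcs $u \to v$ and $v' \to u'$ in $\vec G$; any proper coloring of $B$ combined with $\psi$ yields an oriented coloring of $\vec G$. Following the Clow--Stacho approach and the probabilistic machinery used in the proof of Theorem~\ref{thm:KRX}, the idea is to iteratively peel off induced star forests in $B$, using a random shade assignment with Lov\'asz Local Lemma corrections, to produce a proper coloring of $B$ with $O(\log k)$ colors after removing a small exceptional set of vertices. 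The exceptional vertices, each assigned a fresh color, account for the additive $+g$ term in the bound.

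The hard step is controlling $\chi(B)$: the conflict graph $B$ can have large maximum degree even when $G$ is sparse, so neither a greedy argument nor a direct LLL application on $B$'s degrees will work. Instead, the proof must leverage the Euler-genus sparseness bound $|E(G)| \leq 3|V(G)| + O(g)$, together with the $2$-dipath disjointness of $\psi(N^+(u))$ and $\psi(N^-(u))$, to obtain an \emph{average}-degree bound on $B$ that decays appropriately in $k$. Once such a sparsity transfer from $G$ to $B$ is established, the Clow--Stacho iterated-partitioning technique should yield the logarithmic shade count, with exactly $O(g)$ vertices peeled off as exceptions corresponding to the $O(g)$ ``surplus'' edges of $G$ relative to a planar embedding. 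Verifying this sparsity transfer, and showing that the exceptional set can be taken of size $O(g)$ rather than $O(g \log k)$ or worse, will be the most technical portion of the proof.
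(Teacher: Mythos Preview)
Your conflict-graph framework is sound: a proper coloring $\sigma$ of $B$ combined with $\psi$ does yield an oriented coloring of $\vec G$, and this is in fact equivalent to what the paper's argument produces. The gap is that you never actually bound $\chi(B)$, and the tools you invoke to do so are the wrong ones. ``Peeling off induced star forests'' and the LLL machinery from Theorem~\ref{thm:KRX} are \emph{edge}-partitioning techniques designed to build injective edge colorings; they do not give a proper \emph{vertex} coloring of $B$, and there is no evident ``sparsity transfer from $G$ to $B$'' of the kind you describe. You correctly flag this as the hardest step and then leave it open, so the proposal is really a framework with the decisive lemma missing.

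The paper fills that gap by two ingredients you do not use. First, Lemma~\ref{lem:bdd_n} shows that after deleting a set $V_1$ of at most $6g$ vertices (those of high degree in the degeneracy order), the remaining graph $G' = G \setminus E(G[V_1])$ is $6$-degenerate; Lemma~\ref{lem:minusE} then converts these $6g$ deleted vertices directly into the additive $+O(g)$ term. Second---and this is what you call the ``Clow--Stacho technique'' but then do not actually employ---Lemma~\ref{lem:XoX2} shows that any $d$-degenerate graph with $\chi_2 \le k$ admits an oriented homomorphism into a $(k,d,N)$-full tournament with $N = \lceil 8^d \log k \rceil$, built greedily along the degeneracy order. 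Translated into your language, that homomorphism restricted to the partite-set index is exactly a proper $N$-coloring of your conflict graph $B$ on $G'$, so the paper's argument \emph{is} the missing bound $\chi(B|_{G'}) = O(\log k)$, obtained by a deterministic greedy extension rather than by any probabilistic peeling. Your plan would become a proof if you replaced the LLL/star-forest paragraph with these two lemmas.
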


As mentioned previously, $\chi_2(G) \leq \chi_o(G)$ for all graphs $G$. Hence, if
$k$ is the minimum value for which $\chi_2(G) = O(g^k)$ for all graphs $G$ of Euler genus $g$ and $k \geq 1$,
then the bound $\chi_o(G) = O(g^k \log g)$ also holds,
and this upper bound is tight up to the $\log g$ factor.
This fact is surprising,
given that 
bounds for $\chi_2(G)$ and $\chi_o(G)$ 
based on other parameters of $G$
are often extremely far apart.
For example, Kostochka, Sopena, and Zhu \cite{kostochka1997acyclic} showed that there exist graphs $G$ of maximum degree $\Delta$ satisfying $\chi_o(G) \geq 2^{\Delta/2}$, while a greedy argument shows that $\chi_2(G) \leq \Delta(\Delta-1) + 1$. 

Furthermore, 
this connection between $\chi_2(G)$ and $\chi_o(G)$ for graphs of Euler genus 
$g$
seems to give an encouraging path for future efforts to bound $\chi_o(G)$ in terms of $g$,
as $2$-dipath colorings are generally easier to construct than oriented colorings.
One particular reason is that 
the constraints of a $2$-dipath coloring are local,
whereas oriented colorings must satisfy global properties.
Since directly constructing oriented colorings of graphs is often difficult,
bounds on $\chi_o(G)$ are often
computed using inequalities involving other parameters of $G$, 
as in Theorem \ref{thm:g_improvement}.

\subsection{Notation and preliminaries}
\label{sec:prelim}
Let $Z$ be an event in a probability space which depends on some parameter $x$. If $\lim_{x \rightarrow \infty} \Pr(Z) = 1$, then we say that $Z$ occurs \emph{asymptotically almost surely}, or \emph{a.a.s.}~for short.

If $G$ is an oriented graph, then we write $A(G)$ for the set of arcs in $G$.
If $v \in V(G)$, then we say that $w$ is an \emph{out-neighbor} of $v$ if $vw \in A(G)$. We write $N^+(v)$ for the set of out-neighbors of $v$, and we write $A^+(v)$ for the set of arcs in $G$ outgoing from $v$.
We write $\deg^+(v) = |A^+(v)| = |N^+(v)|$, and we call this quantity the \emph{out-degree} of $v$. We write $\Delta^+(G)$ for the maximum value $\deg^+(v)$ taken over all vertices $v \in V(G)$.
Given a vertex subset $U \subseteq V(G)$, we write $A^+(U) = \bigcup_{v \in U} A^+(v)$.

A \emph{surface} is a 
connected compact Hausdorff space which is locally homeomorphic to an open disc in the plane. 
It is well known (see e.g.~\cite[Section 3]{MoharThomassen}) that for every surface $S$, there exists a graph $G$ with a \emph{$2$-cell embedding in $G$}, that is, an embedding $\pi:G \rightarrow S$ such that each connected component of $S \setminus \pi(G)$ is homeomorphic to an open disc in the plane.
If $S$ is a surface and $G$ is a graph with $n$ vertices and $e$ edges which has an $2$-cell embedding in $G$ with $f$ faces, then the \emph{Euler genus} of $S$ is the quantity $2 - n + e - f$.
The \emph{Euler genus} of a graph $G$ is the minimum value $g$ such that $G$ has an embedding on a surface of Euler genus $g$. 
In particular, a planar graph has Euler genus $g = 0$, and a graph embeddable on the projective plane has Euler genus $g \leq 1$, and a graph embeddable on the torus or Klein bottle has Euler genus $g \leq 2$.
It is well known that if $G$ has an Euler genus of $g$, then $G$ has a $2$-cell embedding on a surface of Euler genus $g$.

\section{The injective chromatic index of degenerate graphs}
\label{sec:deg}
The goal of this section is to prove Theorem \ref{thm:degen_intro} and also to show that the $\log \Delta$ factor in Theorem \ref{thm:degen_intro} cannot be removed. 
The main tool that we will use to prove Theorem \ref{thm:degen} is a random method similar to the one used by Kostochka, Raspaud, and Xu \cite{KRX} to prove Theorem \ref{thm:KRX}. The idea behind our random method is
that first, if $G$ is a $d$-degenerate graph, then we may give $E(G)$ an acyclic orientation of maximum out-degree $d$. Then, if $X$ is an independent set in $G$, a randomly chosen subset $S \subseteq V(G) \setminus X$ will have the property that many vertices $x \in X$ have a unique out-neighbor $a \in S$. This property of $S$ will allow us to find large induced star forests in $G$ and thereby partition many edges of $G$ into few induced star forests.

As the chromatic number of a $d$-degenerate graph is at most $d+1$, the following theorem implies Theorem \ref{thm:degen_intro}.

\begin{theorem}
\label{thm:degen}
If $G$ is a d-degenerate graph of maximum degree $\Delta \geq 3$ and chromatic number $\chi$, then
\[\chinj(G) \leq \lceil 4  ed \log \Delta  \rceil(2d+1) \chi.\]
\end{theorem}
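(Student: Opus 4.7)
The plan is to mimic the probabilistic argument that Kostochka, Raspaud, and Xu use to prove Theorem~\ref{thm:KRX}, replacing the role of the maximum degree with the degeneracy. Since $G$ is $d$-degenerate, I would first fix an acyclic orientation $\vec G$ of $G$ with $\Delta^+(\vec G) \leq d$ (obtained by iteratively removing a vertex of minimum degree and orienting its remaining edges outward) together with a proper $\chi$-coloring $X_1,\dots,X_\chi$ of $G$. It suffices to prove the following \emph{Key Lemma}: for each independent set $X \subseteq V(G)$, the set $\{xy : x\in X,\ x\to y \text{ in } \vec G\}$ can be partitioned into at most $(2d+1)\lceil 4ed\log\Delta\rceil$ induced star forests of $G$. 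Applying the Key Lemma to each color class $X_c$ with a disjoint palette, and using that every edge of $G$ has its tail (in $\vec G$) in some $X_c$, then yields an injective edge coloring of $G$ with at most $\chi(2d+1)\lceil 4ed\log\Delta\rceil$ colors.

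To prove the Key Lemma, I would set $p := 1/d$ and run $T := \lceil 4ed\log\Delta\rceil$ independent rounds. In round $r$, sample $S_r \subseteq V(G)\setminus X$ by including each vertex independently with probability $p$, and let $E_r$ be the set of edges $xy$ with $x \in X$ and $\{y\} = N^+(x) \cap S_r$. For a fixed out-edge $xy$ with $x \in X$, the event $xy \in E_r$ has probability $p(1-p)^{|N^+(x)|-1} \geq \tfrac{1}{d}(1-\tfrac{1}{d})^{d-1} \geq \tfrac{1}{ed}$, so the probability that $xy$ is missed in every $E_r$ is at most $(1-\tfrac{1}{ed})^T \leq \Delta^{-4}$. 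A localized Lov\'asz Local Lemma argument---the bad event ``some out-edge at $x$ is uncovered'' has probability at most $d\Delta^{-4}$ and its dependency degree is at most $d\Delta$---then shows that with positive probability every out-edge from $X$ lies in some $E_r$, and I fix such an outcome.

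The crux of the proof is the structural decomposition of each $E_r$ into at most $2d+1$ induced star forests of $G$. Each $E_r$ naturally splits into stars $S_y$ centered at those $y \in S_r$ which serve as the unique $S_r$-out-neighbor of at least one $x \in X$, with leaves $L_y \subseteq X \cap N^-(y)$. Let $H_r$ be the \emph{star conflict graph} whose vertices are these stars, with two stars adjacent if $G$ has a third edge joining their vertex sets. I would order the stars by the reverse acyclic order on their centers and verify that each star has at most $2d$ back-neighbors in $H_r$. For a star $S_y$ and a back-neighbor $S_{y'}$ (so $y'$ has larger acyclic index than $y$), a conflict must arise either from an edge $yy' \in E(G)$, forcing $y' \in N^+(y)$ (at most $d$ choices), or from an edge $yx'$ with $x' \in L_{y'}$, forcing $x' \in N^+(y) \cap X$ (at most $d$ choices). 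The remaining conflict types---an edge $xy'$ with $x \in L_y$, or an edge between two leaves---are excluded: the former would make $y'$ an $S_r$-out-neighbor of some $x \in L_y$, contradicting the uniqueness of $y$, while the latter is blocked by the independence of $X$. Hence $H_r$ is $2d$-degenerate, and greedy coloring along the reverse acyclic order partitions $E_r$ into at most $2d+1$ induced star forests.

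The main obstacle is the structural analysis of the star conflict graph $H_r$: one must carry out a careful case analysis of back-conflicts under the reverse acyclic order, confirming that the ``leaf-to-center'' and ``leaf-to-leaf'' conflicts vanish and that only two forward conflict types survive, each bounded by $d$. Once this structural lemma is in place, the probabilistic coverage step and the assembly across the $\chi$ color classes are routine.
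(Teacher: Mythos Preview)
Your approach is correct and essentially the same as the paper's: both sample $S_r \subseteq V(G)\setminus X$ with probability $1/d$, use the Lov\'asz Local Lemma to ensure every out-edge from $X$ is covered in some round, and then $(2d+1)$-color an auxiliary conflict graph on the star centers within each round. The only cosmetic difference is that the paper bounds the chromatic number of its directed auxiliary graph $H_c$ via maximum out-degree at most $d$, whereas you argue $2d$-degeneracy along the reverse acyclic order of the centers; note that your exclusion of the ``$xy'$'' conflict type implicitly relies on this ordering (since $x\in L_y$ gives $x\to y$, so $x$ precedes $y<y'$ and the arc must be $x\to y'$), after which your uniqueness argument indeed applies.
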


One tool that we will need to prove Theorem \ref{thm:degen} is the following form of the Lov\'asz Local Lemma.
\begin{lemma}[\cite{LLL}]
\label{lem:LLL}
    Let $\mathcal B$ be a set of bad events in a probability space, and suppose that each event $B \in \mathcal B$ is mutually independent with all but fewer than $D$ other events in $\mathcal B$.
    If each event $B  \in \mathcal B$ has probability at most $p$, and if $ep D \leq 1$, then with positive probability, no event in $\mathcal B$ occurs.
\end{lemma}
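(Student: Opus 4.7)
The plan is to prove the lemma by the standard inductive argument for the symmetric Lovász Local Lemma. The central claim, proved by induction on $|S|$, is that for every $B \in \mathcal B$ and every $S \subseteq \mathcal B \setminus \{B\}$,
\begin{equation*}
\Pr\!\left(B \,\bigg|\, \bigcap_{B' \in S} \overline{B'}\right) \leq \frac{1}{D}.
\end{equation*}
Granted this claim, any enumeration $\mathcal B = \{B_1, \ldots, B_n\}$ combined with the chain rule yields
\begin{equation*}
\Pr\!\left(\bigcap_{i=1}^n \overline{B_i}\right) = \prod_{i=1}^n \Pr\!\left(\overline{B_i} \,\bigg|\, \bigcap_{j<i} \overline{B_j}\right) \geq \left(1 - \frac{1}{D}\right)^n > 0,
\end{equation*}
which is exactly the conclusion.

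For the base case $|S| = 0$, the hypothesis $epD \leq 1$ gives $\Pr(B) \leq p \leq 1/(eD) \leq 1/D$. For the inductive step, I would split $S = S_1 \cup S_2$, where $S_1$ consists of those $B' \in S$ that are not mutually independent of $B$; by hypothesis $|S_1| \leq D - 1$. Writing the conditional probability as a ratio,
\begin{equation*}
\Pr\!\left(B \,\bigg|\, \bigcap_{B' \in S} \overline{B'}\right) = \frac{\Pr\!\left(B \cap \bigcap_{B' \in S_1} \overline{B'} \,\bigg|\, \bigcap_{B' \in S_2} \overline{B'}\right)}{\Pr\!\left(\bigcap_{B' \in S_1} \overline{B'} \,\bigg|\, \bigcap_{B' \in S_2} \overline{B'}\right)},
\end{equation*}
the numerator is at most $\Pr(B \mid \bigcap_{B' \in S_2} \overline{B'}) = \Pr(B) \leq p$ using the mutual independence of $B$ from the events in $S_2$. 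For the denominator, enumerate $S_1 = \{B_{j_1}, \ldots, B_{j_r}\}$ with $r \leq D - 1$ and expand as a telescoping product whose $k$-th factor is $\Pr(\overline{B_{j_k}} \mid \bigcap_{\ell < k} \overline{B_{j_\ell}} \cap \bigcap_{B' \in S_2} \overline{B'})$; each such factor is at least $1 - 1/D$ by the inductive hypothesis applied to a strictly smaller conditioning set, so the denominator is at least $(1 - 1/D)^{D-1} \geq 1/e$. Combining the two estimates yields ratio at most $ep \leq 1/D$, completing the induction.

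The most delicate point is the use of mutual independence in the numerator bound: because $B$ is mutually independent of the collection $S_2$, it is independent of every Boolean function of those events, and in particular of $\bigcap_{B' \in S_2} \overline{B'}$, so that conditioning does not change $\Pr(B)$. The only other book-keeping concern is verifying that every appeal to the inductive hypothesis is legitimate, which holds since each conditioning set appearing in the denominator expansion has size at most $|S_2| + (r-1) < |S_2| + |S_1| = |S|$. Once these two points are handled, the rest of the argument is routine algebraic manipulation, and the inequality $(1-1/D)^{D-1} \geq 1/e$ (valid for all $D \geq 1$) is the only analytic fact required.
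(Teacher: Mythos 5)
The paper does not prove this lemma at all: it is stated as a citation to \cite{LLL} (the standard Lov\'asz Local Lemma), and the authors take it as a known black box. So there is no internal proof to compare yours against.

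That said, your proof is the standard inductive argument for the symmetric LLL and it is essentially correct. The key steps --- the inductive claim $\Pr(B \mid \bigcap_{B'\in S}\overline{B'}) \le 1/D$, the split $S = S_1 \cup S_2$ with $|S_1| \le D-1$, bounding the numerator by $\Pr(B) \le p$ via mutual independence from $S_2$, telescoping the denominator and bounding each factor by $1-1/D$ via the inductive hypothesis, and the elementary inequality $(1-1/D)^{D-1} \ge 1/e$ --- are all present and correctly deployed. Two small points worth making explicit in a polished write-up: (i) the case $S_1 = \emptyset$ should be noted (the denominator is the empty product $1$ and the bound $\Pr(B)\le p \le 1/(eD) \le 1/D$ is immediate); (ii) one should verify, inductively alongside the main claim, that every conditioning event $\bigcap_{B'\in S}\overline{B'}$ has positive probability so that the conditional probabilities are well-defined --- this follows because each factor in the chain-rule expansion is at least $1-1/D > 0$ once $D \ge 2$, and the $D=1$ case is the trivial fully independent setting. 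Also, the phrase ``those $B'$ not mutually independent of $B$'' should really refer to the complement of a set from which $B$ is \emph{jointly} mutually independent (the dependency-graph formulation), not to pairwise dependence; the paper's own statement shares this looseness, and your argument uses the quantity correctly, so this is a wording issue rather than a mathematical one.
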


With the Lov\'asz Local Lemma, we 
are ready to establish our main 
tool
for proving Theorem \ref{thm:degen}.

\begin{lemma}
\label{lem:d2logD}
Let $G$ be an oriented graph with 
maximum degree $\Delta \geq 3$ and 
maximum out-degree $d$.
If $X$ is an independent set in $G$, 
then $A^+(X)$
can be partitioned into
\[\lceil 4  ed \log \Delta  \rceil (2d+1) \]
star forests which are induced in $G$.
\end{lemma}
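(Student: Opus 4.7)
My plan is to adapt the random-subset method of Kostochka, Raspaud, and Xu (used in Theorem~\ref{thm:KRX}), replacing the sampling probability $1/\Delta$ with $1/d$ to exploit the bounded out-degree, together with a proper $(2d+1)$-coloring of $V(G)$ to separate the centers of the induced star forests. First I would observe that since $G$ has maximum out-degree $d$, its underlying undirected graph is $2d$-degenerate: in any subgraph $H$, summing out-degrees gives $|E(H)|\leq d|V(H)|$, hence $H$ has a vertex of degree at most $2d$. Consequently $V(G)$ admits a proper $(2d+1)$-coloring $\phi\colon V(G)\to [2d+1]$. Setting $V_c:=\phi^{-1}(c)\setminus X$ for $c\in[2d+1]$, it suffices to show that for each $c$ the arcs of $A^+(X)\cap(X\times V_c)$ can be partitioned into $\lceil 4ed\log\Delta\rceil$ star forests induced in $G$, and then sum over $c$.

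Fix such a $c$, let $p:=1/d$ and $k:=\lceil 4ed\log\Delta\rceil$, and independently sample $k$ random subsets $T_1,\dots,T_k\subseteq V_c$, including each vertex of $V_c$ in each $T_i$ with probability $p$. For an arc $e=(x,a)$ with $a\in V_c$, I would say that $T_i$ \emph{services} $e$ when $a$ is the unique neighbor of $x$ in $T_i$; in that case $e$ lies in an induced star forest whose centers sit in the independent set $T_i\subseteq V_c$ (killing center--center edges) while the uniqueness condition rules out any leaf--other-center edges. Using $|N^+(x)\cap V_c|\leq d$, I expect the servicing probability to be at least $p(1-p)^{d-1}\geq 1/(ed)$, so the bad event $B_e$ that no $T_i$ services $e$ satisfies $\Pr[B_e]\leq(1-1/(ed))^k\leq\Delta^{-4}$. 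The event $B_e$ depends only on the sampling at vertices of $N(x)\cap V_c$ across the $k$ trials, so two bad events are mutually independent unless $x$ and $x'$ share a neighbor in $V_c$; a direct count bounds the dependency degree by $O(d^2\Delta)$, and the condition $epD\leq 1$ of Lemma~\ref{lem:LLL} holds when $\Delta\geq 3$. Lemma~\ref{lem:LLL} then yields an outcome in which every arc is serviced, and assigning each arc to a servicing trial produces the desired partition into $k$ induced star forests $F^c_1,\dots,F^c_k$.

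The technical heart --- and main obstacle --- lies in the probability estimate. A naive ``unique out-neighbor'' servicing condition achieves the target $1/(ed)$ bound via $|N^+(x)\cap V_c|\leq d$, but does not rule out leaf--other-center edges when $x$ has an in-neighbor $a'\neq a$ in $T_i$; the stricter ``unique overall neighbor'' condition eliminates such edges but can depress the probability to $p(1-p)^{|N(x)\cap V_c|-1}$, which may be much smaller than $1/(ed)$ because $|N^-(x)\cap V_c|$ is not bounded by $d$. Bridging this gap --- most likely by choosing the $(2d+1)$-coloring carefully so that in-neighbor conflicts can be simultaneously controlled by the LLL, or by a secondary splitting that absorbs in-neighbor-based conflicts into the $(2d+1)$ factor --- is the key step whose resolution yields the full $\lceil 4ed\log\Delta\rceil(2d+1)$ bound.
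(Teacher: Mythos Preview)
You have correctly located the obstruction, but your proposal leaves it unresolved, and the order of operations you have chosen is precisely what prevents you from resolving it. Pre-coloring $V(G)$ with $2d+1$ colors and then sampling inside each color class $V_c$ forces you into the dilemma you describe: the ``unique out-neighbor'' servicing rule gives the $1/(ed)$ probability but does not forbid an arc $a'x$ from an in-neighbor $a'\in T_i$ to a leaf $x$ of another star, while the ``unique neighbor'' rule kills that arc but drops the probability to $p(1-p)^{|N(x)\cap V_c|-1}$, which is uncontrolled. A fixed proper coloring of $G$ cannot repair this, because which pairs $(a,a')$ of centers actually conflict depends on the random outcome.

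The paper reverses the order. First run the random procedure on all of $V(G)\setminus X$ using only the ``unique out-neighbor'' criterion (so the $1/(ed)$ bound and your LLL calculation go through, yielding $\lceil 4ed\log\Delta\rceil$ preliminary color classes). Then, \emph{after} sampling, for each class $c$ build an auxiliary digraph $H_c$ on $S_c$ by putting an arc $aa'$ whenever $aa'\in A(G)$ or there exists $x\in X$ with $ax\in A(G)$ and $xa'$ carrying color $c$. This $H_c$ encodes exactly the remaining conflicts (both center--center edges and the in-neighbor paths $a\to x\to a'$ that worried you). The key observation is that $H_c$ still has maximum out-degree at most $d$: each of the at most $d$ out-arcs of $a$ in $G$ contributes at most one out-arc of $a$ in $H_c$, since any intermediate $x\in X$ has a \emph{unique} $c$-colored out-arc. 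Hence $H_c$ is $2d$-degenerate, admits a proper $(2d+1)$-coloring $\phi_c$, and refining color $c$ to $(c,\phi_c(a))$ for each arc $xa$ separates all conflicting centers. The $(2d+1)$ factor is therefore spent on a post-hoc coloring of the outcome-dependent graphs $H_c$, not on $G$ itself.
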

\begin{proof}
We iterate the following random procedure for $c = 1,2, \dots,  \lceil 4 ed  \log \Delta \rceil $:
    \begin{quote}
        Define a random subset $S_c \subseteq V(G) \setminus X$ by adding each vertex $a \in V(G) \setminus X$ to $S_c$ independently with probability $1/d$. Then, for each $x \in X$, if $N^+(x) \cap S_c$ contains exactly one vertex $a$, then color $xa$ with the color $c$.
    \end{quote}
    If our procedure instructs us to color an edge $e$ that has already been colored, then we consider the old color of $e$ to be overwritten by the new color.

    We claim that with positive probability, our procedure colors all arcs in $A^+(X)$.
    For this, we will use the Lov\'asz Local Lemma (Lemma \ref{lem:LLL}).
    For each vertex $x \in X$ and out-neighbor $a \in N^+(x)$, we define the bad event $B_{xa}$ to be the event that our procedure does not color the arc $xa$. If we can show that with positive probability no bad event occurs then this claim holds.
    To this end,
    consider a vertex $x \in X$ and an out-neighbor $a \in N^+(x)$. On each iteration of our procedure, $xa$ is colored if and only if $a \in S_c$ and no other out-neighbor $a' \in N^+(x)$ belongs to $S_c$. The probability that $a \in S_c$ is $\frac{1}{d}$, and given that $a \in S_c$, the probability that no other out-neighbor $a' \in N^+(x)$ belongs to $S_c$ is $(1-\frac{1}{d})^{\deg^+(x)-1} \geq (1 - \frac{1}{d})^{d-1} > \frac{1}{e}$. Hence, on each iteration, $xa$ is colored with probability at least $\frac{1}{ed}$, and hence the probability that $xa$ is never colored during this procedure is at most $(1 - \frac{1}{ed})^{4 ed  \log \Delta } < \Delta^{-4}$.  
    
    Next, we count the dependencies of each bad event $B_{xa}$.
    We observe that the event $B_{xa}$ is determined entirely by independent random events at $N^+(x)$. 
    Therefore, $B_{xa}$ is mutually independent with all sets of bad events $B_{yb}$ (for $y \in X$ and $b \in V(G) \setminus X$) where $\dist(x,y) > 3$. The number of vertices $y \in X$ within distance $2$ of $x$ is less than $\Delta^2$,
    and each vertex $y$ has at most $\Delta$ out-neighbors $b \in N^+(y)$. Therefore, $B_{xa}$ is mutually independent with all but fewer than $\Delta^3$ other bad events $B_{yb}$. Since $ \Delta^{-4} \cdot \Delta^3 \cdot e < 1$ for $\Delta \geq 3$, it follows from the Lov\'asz Local Lemma (Lemma \ref{lem:LLL}) that
    with positive probability, no bad event $B_{xa}$ occurs. Thus, we will assume that our coloring procedure assigns a color to each arc of $A^+(x)$.

    Now, for each color $c$ used during our procedure,
    we recolor each arc of color $c$ with an ordered pair of the form $(c,i)$, as follows.
    For each color $c$, 
    we define a directed graph $H_c$ on the vertex set $S_c$. For two vertices $a,a' \in S_c$, we say that $aa' \in A(H_c)$ if and only if  either $aa' \in A(G)$ or     $G$ contains a (not necessarily colored) arc $ax$ and an arc $x a'$ of color $c$. Since each vertex $a \in S_c$ has out-degree at most $d$, and since each vertex $x \in X$ has at most one outgoing edge of color $c$, it follows that $H_c$ has maximum out-degree at most $d$. Hence, the maximum average degree of $H_c$ is at most $2d$, so $H_c$ can be properly colored with $2d+1$ colors. Let $\phi_c$ be a proper coloring of $H_c$ with $2d+1$ colors. For each arc $xa$ of color $c$ in $G$, we give $xa$ the new color $(c,\phi_c(a))$.

    Next, consider two arcs $x_1a_1, x_2a_2 \in A(G)$ at distance $1$ which originally received the color $c$. Since each vertex $x \in X$ contained in an arc of color $c$ has exactly one out-neighbor in $S_c$, it follows that $x_1 a_1$ and $x_2 a_2$ must be joined without loss of generality either by the arc $a_1 x_2$ or the arc $a_1 a_2$. Then, $H_c$ must contain the arc $a_1 a_2$, and hence $\phi_c(a_1) \neq \phi_c(a_2)$. 
    Therefore, $x_1 a_1$ and $x_2 a_2$ receive different colors after recoloring, and thus each color class created by our procedure is an induced star forest in $G$.
    As our coloring procedure uses at most $\lceil 4  ed \log \Delta  \rceil (2d+1)$ colors, the proof is complete.
    \end{proof}

Now, the proof of Theorem \ref{thm:degen} follows easily.
\begin{proof}[Proof of Theorem \ref{thm:degen}]
Let $G$ be a $d$-degenerate graph. Give $G$ a proper $\chi$-coloring $\psi$, and apply Lemma \ref{lem:d2logD} to each color class $X$ of $\psi$ to produce an edge coloring $\phi$ of $G$.
Since each arc $e \in A(G)$ belongs to $A^+(X)$ for some color class $X$ of $G$,
$\phi(e)$ is defined for every edge $e \in E(G)$.
As each color class of $\phi$
is an induced star forest in $G$,
$\phi$ is
an injective edge coloring of $G$ with at most $\lceil 4  ed \log \Delta  \rceil (2d+1) \chi$ colors.
\end{proof}

To prove that the $\log \Delta$ factor in Theorem \ref{thm:degen} cannot be entirely avoided, we use the following lemma and proposition.

\begin{lemma}\cite[Exercise 5.1.32]{West}
    \label{lem:west}
    A graph $G$ is $2^k$-colorable if and only if $G$ is the union of $k$ bipartite graphs.
\end{lemma}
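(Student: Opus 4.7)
The plan is to establish both directions by exploiting the correspondence between colorings with $2^k$ colors and length-$k$ binary strings. This is a classical exercise, so I expect the work to be bookkeeping rather than genuine difficulty; the only real decision is how to phrase the coordinate-wise combination of $2$-colorings cleanly.

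For the forward direction, I would start with a proper coloring $\phi: V(G) \to \{0,1\}^k$ obtained by identifying the $2^k$ color classes with binary strings of length $k$. For each coordinate $i \in \{1,\dots,k\}$, I would define a spanning subgraph $G_i$ of $G$ whose edges are exactly those $uv \in E(G)$ with $\phi(u)_i \neq \phi(v)_i$. Each $G_i$ is bipartite, since the partition of $V(G)$ according to the $i$-th coordinate of $\phi$ is a bipartition. Since $\phi$ is proper, any edge $uv \in E(G)$ has $\phi(u) \neq \phi(v)$, which forces some coordinate to disagree, and hence every edge lies in at least one $G_i$. Therefore $G = G_1 \cup \cdots \cup G_k$.

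For the backward direction, I would assume $G = G_1 \cup \cdots \cup G_k$ with each $G_i$ bipartite. I would fix a proper $2$-coloring $\phi_i: V(G) \to \{0,1\}$ of $G_i$ and then define $\phi(v) = (\phi_1(v), \dots, \phi_k(v)) \in \{0,1\}^k$. For any edge $uv \in E(G)$, there is some $i$ with $uv \in E(G_i)$, which forces $\phi_i(u) \neq \phi_i(v)$, and hence $\phi(u) \neq \phi(v)$. This gives a proper coloring of $G$ using at most $2^k$ colors.

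The hardest step is essentially just the observation that a $2$-coloring of a bipartite graph is the same data as a function $V(G) \to \{0,1\}$ with no monochromatic edge, and that the product of $k$ such functions distinguishes endpoints of every edge covered by the union. No Local Lemma, genus, or degeneracy machinery is required; the lemma is a purely combinatorial translation between $k$-bit colorings and bipartite covers, and the proof plan above should take only a few lines when written in full.
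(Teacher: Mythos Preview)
Your proof is correct and is the standard argument for this classical exercise. The paper does not actually give its own proof of this lemma; it simply cites it as \cite[Exercise 5.1.32]{West} and uses the statement as a black box in the proof of Proposition~\ref{prop:log}.
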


\begin{proposition}
\label{prop:log}
Let $G$ be a graph, and let $G'$ be obtained from $G$ by subdividing each edge exactly once. Then $\chinj(G') \geq \log_2 \chi(G)  $.
\end{proposition}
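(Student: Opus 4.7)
The plan is to apply Lemma~\ref{lem:west}: it suffices to show that if $\chinj(G') \leq k$, then $E(G)$ can be covered by $k$ bipartite subgraphs, because then $\chi(G) \leq 2^k$ and so $\chinj(G') \geq \log_2\chi(G)$.

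Fix an injective edge coloring $\phi:E(G') \to \{1,\dots,k\}$. Every edge $e = uv \in E(G)$ corresponds to a subdivided path $u\, w_e\, v$ in $G'$, whose two halves receive colors $\phi(uw_e)$ and $\phi(vw_e)$. For each color $c$, I would define
\[ E_c := \{\, uv \in E(G) : \phi(uw_{uv}) = c \text{ or } \phi(vw_{uv}) = c\,\}. \]
Since every edge of $G$ has both of its halves colored, $\bigcup_{c=1}^k E_c = E(G)$, so it would remain to prove that each graph $H_c := (V(G), E_c)$ is bipartite.

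To analyze $H_c$, I would split $E_c$ into \emph{Case (i)} edges $uv$ with $\phi(uw_{uv}) = \phi(vw_{uv}) = c$, and \emph{Case (ii)} edges for which exactly one of the two halves has color $c$. Using that the color-$c$ class of $\phi$ is an induced star forest in $G'$, I would establish two structural claims. First, the endpoints of a Case (i) edge $uv$ are incident to no other edge of $E_c$: any further half of color $c$ at $u$, at $v$, or at the subdivision vertex $w_{uv}$ in $G'$ would either force a path (rather than a star) inside the color-$c$ class, or would produce a pair of same-colored edges joined by a ``third edge'' (namely one of $uw_{uv}$, $vw_{uv}$, or $uw_{uz}$ for some neighbor), contradicting the injective property. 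Second, for a Case (ii) edge $uv$ with $\phi(uw_{uv}) = c$ and $\phi(vw_{uv}) \neq c$, the vertex $v$ has no color-$c$ edge at all in $G'$: a candidate half $vw_{vy}$ ($y \neq u$) colored $c$ would conflict with $uw_{uv}$ via the third edge $vw_{uv}$, which is present in $G'$ because $uv \in E(G)$.

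These two claims let me decompose $H_c$ as a vertex-disjoint union of a matching (the Case (i) edges, which by the first claim meet no other edges of $E_c$) and a bipartite graph whose parts are the set $A_c^{(\mathrm{ii})}$ of ``$c$-half'' endpoints of Case (ii) edges and the complement $V(G) \setminus A_c$; the second claim ensures that every Case (ii) edge crosses this partition. Hence $H_c$ is bipartite, and applying Lemma~\ref{lem:west} yields $\chi(G) \leq 2^k$ as desired. I expect the main technical point to be the bookkeeping in the structural claims above: for each candidate color-$c$ half in $G'$ one must correctly identify the third edge of $G'$ that witnesses the injective conflict, distinguishing whether the conflict is incident to $u$, to $v$, or to $w_{uv}$. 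Once this case analysis is in place, the bipartiteness of $H_c$ and hence the proposition follow immediately.
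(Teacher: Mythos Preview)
Your proof is correct and follows essentially the same route as the paper: for each color $c$ you form the subgraph $H_c$ of $G$ consisting of edges of $G$ with at least one half colored $c$, show each $H_c$ is bipartite using the injective property, and invoke Lemma~\ref{lem:west}. The only difference is cosmetic: the paper orients each edge of $H_c$ from the $c$-half endpoint outward and observes in one line that the resulting digraph has no directed path of length~$2$ (so every vertex is a source or a sink, hence $H_c$ is bipartite), whereas your Case~(i)/(ii) analysis unwinds exactly this observation by hand.
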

\begin{proof}
    We consider $G$ to be a directed graph in which each edge is bidirectional.
    Also, for ease of presentation, we consider $V(G)$ to be a subset of $V(G')$ in the natural way.
    Now, consider an injective edge-coloring $\phi$ of $G'$, and let $c$ be one of the colors used by $\phi$. 
    We represent the edges in $G'$ colored by $c$ with a directed subgraph $H_c$ of $G$ as follows. 
    If an edge $e \in E(G')$ of color $c$ is incident to a vertex $u \in V(G)$ and is at distance $1$ from a vertex $v \in V(G)$, we add an arc in $H_c$ from $u$ to $v$. Since $\phi$ is an injective edge-coloring, $H_c$ has no directed path of length $2$
    and hence is bipartite. Therefore, the color classes of $\phi$ correspond to directed bipartite subgraphs of $G$ whose union gives the entire arc set of $G$. By Lemma \ref{lem:west}, 
    $\phi$ uses at least $\log_2 \chi(G)$ colors, completing the proof.
\end{proof}
We note that a similar argument shows that $\chinj(G') \leq 2 \lceil \log_2 \chi(G) \rceil$.

By Proposition \ref{prop:log}, if $G$ is a graph with maximum degree $\Delta$ and chromatic number $\Delta^c$ for some constant $c > 0$, then $G'$ is a $2$-degenerate graph satisfying $\chinj(G') \geq c \log \Delta$.
Therefore, the $\log \Delta$ factor in Theorem \ref{thm:degen} cannot be removed when $d$ is small.

\section{The injective chromatic index of graphs of bounded genus}
\label{sec:gen}
In this section, we aim to 
prove Theorem \ref{thm:g_intro},  
which states that if $G$ is a graph of Euler genus $g$, then $\chinj(G) \leq (3 + o(1))g$.
If $G$ has Euler genus at most $1$, then $G$ has acyclic chromatic number at most $7$ \cite{AMS},
and hence
the inequality (\ref{eqn:XiXa})
tells us that $\chinj(G) \leq 63$.
As our goal is to obtain an asymptotic bound on the injective chromatic index of $G$ in terms of  $g$, we will assume in this section that $g \geq 2$, so that $g$ and $\log g$ are both positive.

To prove an upper bound on the injective chromatic index of a graph in terms of its Euler genus, we use an approach similar to the one in the proof
of Lemma \ref{lem:d2logD}. 
Recall that in Lemma \ref{lem:d2logD},
we consider a graph $G$ and an independent set $X \subseteq V(G)$, and we have a random procedure that aims to choose sets $S_c \subseteq V(G) \setminus X$ with the property that for many vertices $x \in X$, $|N^+(x) \cap S_c| = 1$.
One disadvantage of this method is that since the random approach relies on the Lov\'asz Local Lemma, the upper bound that we obtain from this method depends on the maximum degree of the graph.

In this section, we will develop a deterministic version of this procedure that often gives us bounds that do not depend on the maximum degree of a graph, as follows.
Suppose we have a graph $G$ and an independent set $X \subseteq V(G)$. Rather than adding vertices of $V(G) \setminus X$ to a set $S_c$ independently at random, 
we first give $V(G) \setminus X$ a coloring $\phi$ so that for each $x \in X$, the vertices in $N^+(x)$ all receive distinct colors. Then, 
we construct
sets $S_c \subseteq V(G) \setminus X$ with the property that $|N^+(x) \cap S_c| = 1$ for many vertices $x \in X$ by considering the colors of the vertices in $V(G) \setminus X$. 
By using this approach, we
remove dependence on maximum degree from the upper bounds that we obtain.
We note that Dvo\v{r}\'ak \cite{Dvorak}
uses a similar technique 
to characterize graph classes with bounded acyclic chromatic number.

First, we establish a lemma which will help us find colorings $\phi$ satisfying the properties described above.

\begin{lemma}
    \label{lem:partitions}
    Let $k \geq r \geq 2$ be positive integers. There exists a family of $\lceil er^2  \log k   \rceil $ subsets $P_i \subseteq \{1, \dots, k\}$ 
    such that for each sequence $(a_1, \dots, a_{\ell})$ of $\ell \leq r$ distinct elements from $\{1, \dots, k\}$, there exists a subset $P_i$ such that $a_1 \in P_i$ and $a_2, \dots, a_{\ell} \not \in P_i$.
\end{lemma}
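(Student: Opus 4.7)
The plan is to construct the family probabilistically. I would take $m = \lceil er^2 \log k \rceil$ subsets $P_1, \dots, P_m$ independently at random, where each $P_i$ is formed by including every element of $\{1, \dots, k\}$ independently with probability $p = 1/r$. Call $P_i$ \emph{good} for the sequence $(a_1, \dots, a_\ell)$ if $a_1 \in P_i$ and $a_2, \dots, a_\ell \notin P_i$, and call the sequence \emph{bad} if no $P_i$ in the family is good for it. The goal is then to show that with positive probability, no sequence of length at most $r$ from $\{1,\dots,k\}$ is bad, which is precisely the required property of the family.

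First I would bound the probability that a fixed length-$\ell$ sequence with $\ell \leq r$ is bad. A single $P_i$ is good for it with probability
\[
\tfrac{1}{r}\bigl(1 - \tfrac{1}{r}\bigr)^{\ell - 1} \;\geq\; \tfrac{1}{r}\bigl(1 - \tfrac{1}{r}\bigr)^{r-1} \;\geq\; \tfrac{1}{er},
\]
using the standard inequality $(1-1/r)^{r-1} \geq 1/e$. By independence of the $m$ subsets, the probability the sequence is bad is at most $(1 - 1/(er))^m \leq e^{-m/(er)} \leq k^{-r}$.

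Finally I would apply the Lov\'asz Local Lemma (Lemma~\ref{lem:LLL}) to deduce that with positive probability no sequence is bad. The bad event for $(a_1,\dots,a_\ell)$ is determined by which of $P_1,\dots,P_m$ each of $a_1,\dots,a_\ell$ belongs to, so bad events whose underlying sequences share no element are mutually independent. Counting the sequences of length at most $r$ containing any fixed element gives at most $O(r\cdot k^{r-1})$ such sequences, so each bad event has dependency degree $D = O(r^2 k^{r-1})$. Then $e p D = O(r^2/k)$, which is at most $1$ once $k$ is sufficiently large compared to $r$; for the remaining small values of $k$ one can use the trivial family of singletons $\{1\},\dots,\{k\}$, which has $k \leq \lceil er^2 \log k \rceil$ elements in that regime.

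The main delicate point I expect is extracting the precise constant $e$ in $\lceil er^2 \log k\rceil$: a straightforward union bound over all $O(k^r)$ sequences yields an expected bad-sequence count of $O(1)$, which is not strictly less than $1$, so either the Lov\'asz Local Lemma argument above or a sharper count (exploiting that the bad event depends only on $a_1$ and the unordered set $\{a_2,\dots,a_\ell\}$, shaving a factor of $(r-1)!$) is needed to close the gap.
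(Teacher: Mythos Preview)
Your random construction and the bad-sequence probability bound match the paper exactly. The only divergence is in the last step: you worry that a union bound gives an expected bad-sequence count that is $O(1)$ rather than strictly less than $1$, and you reach for the Lov\'asz Local Lemma (plus a separate singleton argument for small $k$) to close the gap. This can be made to work, but it is unnecessary.

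The paper finishes with a plain union bound, and the strict inequality comes for free from two places you already touched non-strictly. First, $(1-1/r)^{r-1} > 1/e$ is \emph{strict} for every integer $r \geq 2$, so the success probability for a single $P_i$ is strictly greater than $1/(er)$; consequently the bad probability is strictly less than $(1-1/(er))^m \leq k^{-r}$. Second, after reducing to $\ell = r$ (valid since $k \geq r$ lets you extend any shorter sequence), the number of length-$r$ sequences of distinct elements is $k(k-1)\cdots(k-r+1)$, which is strictly less than $k^r$ for $r \geq 2$. Multiplying a quantity strictly below $k^{-r}$ by one strictly below $k^r$ gives an expected count strictly below $1$, and existence follows. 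No LLL, no side case for small $k$, and no need to exploit the $(r-1)!$ symmetry you mention.
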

\begin{proof}
    It is enough to prove the lemma under the assumption that $\ell = r$.    
    For $1 \leq i \leq \lceil er^2 \log k  \rceil$,
    we construct a subset $P_i$
    by adding each $j \in \{1,\dots,k\}$ to $P_i$ independently with probability $\frac{1}{r}$.
    We note that the subsets $P_i$ in our family may not all be distinct, but this is not a problem.
    For a given sequence $(a_1, \dots, a_{r})$, the probability that $a_1 \in P_i$ and $a_2, \dots, a_{r}  \not \in P_i$ is equal to $\frac{1}{r}(1-\frac{1}{r})^{r-1} > \frac{1}{er}$. 
    Therefore, the probability that these conditions do not hold for any subset $P_i$
    is less than 
    \[\left (1 - \frac{1}{er} \right )^{\lceil er^2 \log k  \rceil } < \frac{1}{k^r}.\]  
    As the number of sequences $(a_1, \dots, a_r)$ with $ r$ distinct elements from $\{1,\dots,k\}$ is less than $k^r $,
    the expected number of such sequences 
    that do not satisfy our property for some $P_i$
    is less than $1$. Thus, with positive probability, our subsets $P_i$ satisfy the lemma.
\end{proof}

Now, using Lemma \ref{lem:partitions}, we can
carry out a deterministic analogue of the random procedure from Lemma \ref{lem:d2logD}.
Using our new deterministic procedure, 
we establish the following lemma, which will
help us find bounds for the injective chromatic indices of graphs which do not depend on maximum degree.
\begin{lemma}
\label{lem:colorclass}
    Let $G$ be an oriented graph, and let $X \subseteq V(G)$ be an independent set in $G$ with maximum out-degree $d$. Let $H$ be a graph on $V(G) \setminus X$ defined so that two vertices $u,v \in V(G) \setminus X$ are adjacent if and only if there exists a vertex $x \in X$ such that $u,v \in N^+(x)$. Then, the set of arcs in $G$ outgoing from $X$ can be partitioned into 
    \[ \lceil ed^2 \log \chi(H)  \rceil( 2\Delta^+(G) + 1 ) = O(d^2 \Delta^+(G) \log \chi(H))\]
    star forests which are induced in $G$.
\end{lemma}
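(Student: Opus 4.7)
The plan is to replace the random subset construction from Lemma~\ref{lem:d2logD} with an explicit family of subsets produced by Lemma~\ref{lem:partitions}, using a proper coloring of $H$ to ensure that the out-neighborhoods of vertices in $X$ look like sequences of distinct colors. Set $k = \chi(H)$ and fix a proper $k$-coloring $\phi : V(G) \setminus X \to \{1,\dots,k\}$. Since every $x \in X$ has its out-neighborhood $N^+(x)$ forming a clique in $H$ (by the very definition of $H$), the colors assigned by $\phi$ to the vertices of $N^+(x)$ are pairwise distinct; and since $|N^+(x)| \leq d$, these color lists are sequences of length at most $d$. Apply Lemma~\ref{lem:partitions} with $r = d$ to obtain a family of subsets $P_1, \dots, P_m \subseteq \{1,\dots,k\}$ with $m = \lceil ed^2 \log k \rceil$, and let $S_i = \phi^{-1}(P_i) \subseteq V(G) \setminus X$.

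Next, for each $i \in \{1,\dots,m\}$, I would color an arc $xa \in A^+(X)$ with color $i$ whenever $N^+(x) \cap S_i = \{a\}$ (overwriting previous colors if necessary). The key point is that Lemma~\ref{lem:partitions} guarantees every arc gets colored: for any $x \in X$ and any $a \in N^+(x)$, listing the colors of $N^+(x)$ with $\phi(a)$ first gives a sequence of distinct elements of length $\leq d$, so some $P_i$ contains $\phi(a)$ while excluding all other colors of $N^+(x)$, and then $xa$ receives color $i$. This directly mirrors the conclusion of the probabilistic procedure in Lemma~\ref{lem:d2logD}, but now achieved deterministically using only $\lceil ed^2 \log k \rceil$ colors.

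To upgrade the coloring into an induced star forest decomposition, I would perform the same refinement step as in Lemma~\ref{lem:d2logD}. For each $i$, build an auxiliary directed graph $H_i$ on vertex set $S_i$ by adding an arc $aa'$ whenever either $aa' \in A(G)$ or there exist an arc $ax \in A(G)$ with $x \in X$ and an arc $xa'$ of color $i$. Each $a \in S_i$ has out-degree in $H_i$ at most $\deg^+_G(a) \leq \Delta^+(G)$ (because every $x \in X$ contributes at most one colored outgoing arc of color $i$, namely the one whose head lies in $S_i$). Hence $H_i$ is $2\Delta^+(G)$-degenerate and admits a proper $(2\Delta^+(G) + 1)$-coloring $\phi_i$; recoloring each arc $xa$ of color $i$ with the pair $(i, \phi_i(a))$ gives the claimed total number of colors.

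Finally, I would verify as in Lemma~\ref{lem:d2logD} that the new color classes are induced star forests. If $x_1 a_1$ and $x_2 a_2$ both received color $i$ and are at distance $1$ in $G$, then since $a_1$ and $a_2$ are the unique elements of $S_i$ in $N^+(x_1)$ and $N^+(x_2)$ respectively, the connecting edge must be of the form $a_1 a_2$ or $a_1 x_2$; in either case $H_i$ contains the arc $a_1 a_2$, so $\phi_i(a_1) \neq \phi_i(a_2)$ and the two arcs get distinct final colors. I do not expect any serious obstacle here; the only subtle point is bounding the out-degree in $H_i$ by $\Delta^+(G)$ rather than $d$, since here $d$ bounds only the out-degrees of vertices in the independent set $X$, not those of the vertices of $S_i \subseteq V(G) \setminus X$ where the recoloring takes place.
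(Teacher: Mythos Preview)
Your proposal is correct and follows essentially the same approach as the paper: fix a proper $\chi(H)$-coloring of $V(G)\setminus X$, invoke Lemma~\ref{lem:partitions} to get the family $\{P_i\}$, define $S_i=\phi^{-1}(P_i)$, color $xa$ with $i$ when $a$ is the unique member of $N^+(x)\cap S_i$, and then refine via a proper $(2\Delta^+(G)+1)$-coloring of an auxiliary digraph. The only cosmetic difference is that the paper builds its auxiliary digraph $D_i$ on the smaller set $V_i=\{z_i(x):x\in X_i\}$ rather than on all of $S_i$, and it handles the ``connecting edge is $x_2 a_1$'' case explicitly (deriving $a_1\notin V_i$) where you rule it out implicitly via uniqueness of $a_2$ in $N^+(x_2)\cap S_i$; neither change affects the argument.
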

\begin{proof}
    Let $\phi$ be a proper coloring of $H$ with $k = \chi(H)$ colors. %
    Let $\mathcal P$ be a set of $\lceil ed^2 \log k  \rceil$ subsets $P_i \subseteq \{1,\dots,k\}$ such that for each sequence $(a_1,\dots,a_{\ell})$ of $\ell \leq r$ distinct elements from $\{1,\dots,k\}$, there exists a subset $P_i$ such that $a_1 \in P_i$ and $a_2,\dots,a_{\ell} \not \in P_i$.
    The set $\mathcal P$ exists by Lemma \ref{lem:partitions}.

    Now, we color $A^+(X)$ as follows. For each subset $P_i \in \mathcal P$,
    we execute the following steps. 
    \begin{enumerate}
        \item Initialize sets $V_i = \emptyset$, $E_i = \emptyset$.
        \item Define $X_i \subseteq X$ as the set of vertices $x \in X$ such that exactly one vertex $v \in N^+(x)$ satisfies $\phi(v) \in P_i$.
        \item 
        For each $x \in X_i$, let $z_i(x)$ be the unique out-neighbor of $x$ for which $\phi(z_i(x)) \in P_i$.
        Update $V_i \leftarrow V_i \cup \{z_i(x)\}$ and  $E_i \leftarrow E_i \cup \{x z_i(x)\}$.
        \item Define an oriented graph $D_i$ with vertex set $V_i$ and with arcs defined as follows. 
    For any two vertices $u,v \in V_i$, add the arc $uv$ to $D_i$ if and only if either $uv \in A(G)$ or there exists a vertex $x \in X_i$ 
    such that $ux \in A(G)$ and $v = z_i(x)$. 
    \item \label{step:color} Give $D_i$ a proper coloring $\psi_i$ using the set $\{1, \dots, 2\Delta^+(G) + 1\}$. 
    \item For each arc $xv \in E_i$, color $xv$ with the color $(i,\psi_i(v))$.
    \end{enumerate}
    Again, if our procedure asks us to color some arc $e \in A(G)$ that has already been colored, then we let the new color of $e$ replace the old color.
    Note that since each vertex $x \in X_i$ has a unique out-neighbor $z_i(x)$, 
    it follows that for each $v \in V_i$, $\deg^+_{D_i}(v) \leq \deg^+_G(v)$, so $D_i$ has maximum out-degree at most $\Delta^+(G)$. Hence, $D_i$ is a $2 \Delta^+(G)$-degenerate graph, and thus Step (\ref{step:color}) is possible.

    First, we claim that the procedure above colors each arc of $A^+(X)$. Indeed, consider a vertex $x \in X$ and an out-neighbor 
    $u \in N^+(x)$. 
    Write $N^+(x) = \{u, w_1, \dots, w_t\}$, and consider the sequence $S = (\phi(u), \phi(w_1),\dots,\phi(w_t))$. Since $N^+(x)$ induces a clique in $H$, all elements in $S$ are distinct. 
    Hence, there exists a subset $P_i \in \mathcal P$
    such that $\phi(u) \in P_i$ and $\phi(w_1),\dots,\phi(w_t) \not \in P_i$. Then, by construction, $xu$ is colored with some color $(i,\psi_i(u))$.

    Next, we claim that in our 
    coloring of $A^+(X)$, each color class is a star forest which is induced in $G$. Indeed, consider two arcs $xu$ and $yv$ in $A(G)$ at distance $1$ in $G$, where $x,y \in X$ and $u,v \in V(G) \setminus X$.
    Suppose that both $xu$ and $yv$ are colored with the color $(i,j)$.
    If this occurs, then it must hold that $u = z_i(x)$, $v = z_i(y)$, and $u, v \in V_i$.
    Since $xu$ and $yv$ are at distance $1$ in $G$, and since $X$ is an independent set, one of the following cases must hold without loss of generality.
    \begin{enumerate}
        \item $yu \in A(G)$.
        Since
        $v = z_i(y)$,
        $\phi(v) \in P_i$ and $\phi(u) \not \in P_i$. Then $u \not \in V_i$, a contradiction.
        \item $uy \in A(G)$.
        As $v = z_i(y)$, 
        $uv$ is an arc of $D_i$. Hence, $\psi_i(u) \neq \psi_i(v)$, and $xu$ and $yv$ cannot both be colored with $(i,j)$, a contradiction.
        \item $uv \in A(G)$.
        Then, as before, 
        $uv$ is an arc of $D_i$, and $\psi_i(u) \neq \psi_i(v)$, which again gives a contradiction.
    \end{enumerate}
    Therefore, each color class in $A^+(X)$ 
    produced by our procedure is an induced star forest in $G$. Since each color class is of the form $(i,j)$, where $i \in \{1,\dots,\lceil ed ^2 \log k  \rceil\}$ and $j \in \{1,\dots, 2\Delta^+(G) + 1\}$, we use 
    at most $ \lceil ed^2 \log k  \rceil( 2\Delta^+(G) + 1 )$
    colors in our coloring. This completes the proof.
\end{proof}

While Lemma \ref{lem:colorclass}
does give us a tool for partitioning the edges of an oriented graph $G$ into induced star forests, it is unclear how to estimate the chromatic number of the graph $H$ defined in the lemma statement
in general.
In the following lemmas, we will
show that if $G$ is an oriented graph of Euler genus $g$, then 
whenever we apply Lemma \ref{lem:colorclass} to an independent set $X \subseteq V(G)$,
we can bound the value $\chi(H)$ by a function of $g$ and $\Delta^+(G)$.

Given a hypergraph $\mathcal H$, the \emph{Levi graph} of $\mathcal H$ is the bipartite graph $L$ with vertex set $V(\mathcal H) \cup E(\mathcal H)$
such that for each $v \in V(\mathcal H)$ and $e \in E(\mathcal H)$, $ve \in E(L)$ if and only if $v \in e$.
Adopting a standard convention (see e.g.~\cite{JM}),
we say that $\mathcal H$ has Euler genus $g$ if and only if the Levi graph of $\mathcal H$ has Euler genus $g$.
Additionally,
we define
the \emph{clique graph} $K(\mathcal H)$ of $\mathcal H$ 
as the graph on $V(\mathcal H)$ such that two vertices $u,v \in V(\mathcal H)$ are adjacent in $K(\mathcal H)$ if and only if $u$ and $v$ belong to a common edge of $\mathcal H$. Observe that $K(\mathcal H)$ is formed by replacing each edge of $\mathcal H$ with a clique.
The notion of a clique graph will be useful to us, as the graph $H$ in the statement of Lemma \ref{lem:colorclass}
can be defined as the clique graph of a certain hypergraph.

In the following lemmas, we establish an upper bound for the chromatic number of a clique graph obtained from a hypergraph of Euler genus $g$.

\begin{lemma}
\label{lem:mindeg}
    If $\mathcal H$ is a hypergraph with edges of size at most $r$ and Euler genus at most $g \geq 2$, then $K(\mathcal H)$ has a vertex of degree at most $20 r^2 \sqrt g - 1$.
\end{lemma}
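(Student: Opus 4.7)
The plan is to bound $|E(K(\mathcal H))|$ in terms of $n = |V(\mathcal H)|$, $g$, and $r$, and then to conclude via an averaging argument. Without affecting $K(\mathcal H)$ or increasing the Euler genus, I may assume that $\mathcal H$ has no repeated hyperedges and no hyperedges of size smaller than $2$. Let $L$ be the Levi graph of $\mathcal H$; by hypothesis $L$ is a simple bipartite graph on $n + m$ vertices of Euler genus at most $g$, where $m = |E(\mathcal H)|$. Since every face of a $2$-cell embedding of $L$ has length at least $4$, Euler's formula yields
\[
|E(L)| \;=\; \sum_{e \in E(\mathcal H)} |e| \;\leq\; 2(n + m + g - 2).
\]
Letting $m_s$ be the number of hyperedges of size exactly $s$ and rearranging, this becomes $\sum_s (s - 2) m_s \leq 2(n + g - 2)$, and in particular the number $m_{\geq 3}$ of hyperedges of size at least $3$ satisfies $m_{\geq 3} \leq 2(n + g - 2)$.

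Next, I split the edges of $K(\mathcal H)$ according to the size of the hyperedge generating them. Let $G_2 \subseteq K(\mathcal H)$ be the subgraph consisting of edges coming from size-$2$ hyperedges. For each such hyperedge $\{u,v\}$ the vertex $e^\ast \in V(L)$ representing it has exactly the two neighbors $u$ and $v$, so the edge $uv$ of $G_2$ arises from a length-$2$ path in $L$; thus $G_2$ is a topological minor of $L$, has Euler genus at most $g$, and satisfies $|E(G_2)| \leq 3(n + g - 2)$. Since every hyperedge of size $s \leq r$ contributes at most $\binom{s}{2} \leq \binom{r}{2}$ edges to $K(\mathcal H)$, combining these bounds gives
\[
|E(K(\mathcal H))| \;\leq\; 3(n + g - 2) \;+\; \binom{r}{2}\cdot 2(n + g - 2) \;\leq\; (r^2 + 3)(n + g - 2).
\]

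Finally, I case on $n$. If $n \leq 20 r^2 \sqrt{g}$, every vertex of $K(\mathcal H)$ has degree at most $n - 1 \leq 20 r^2 \sqrt{g} - 1$, and the claim is immediate. Otherwise $n > 20 r^2 \sqrt{g}$, and the average degree of $K(\mathcal H)$ is at most
\[
\frac{2(r^2 + 3)(n + g - 2)}{n} \;\leq\; 2(r^2 + 3) \;+\; \frac{(r^2 + 3)\sqrt{g}}{10 r^2},
\]
which for $r \geq 2$ and $g \geq 2$ is easily checked to be strictly smaller than $20 r^2 \sqrt{g} - 1$, so some vertex realizes this bound. The only slightly delicate step is the topological-minor observation used to pass from the bipartite estimate $|E(L)| \leq 2(|V(L)|+g-2)$ to the tighter simple-graph estimate $|E(G_2)| \leq 3(|V(G_2)|+g-2)$ on the size-$2$ portion; everything else is routine bookkeeping with Euler's formula and a short constant check at the end.
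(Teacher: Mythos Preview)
Your proof is correct and follows essentially the same approach as the paper: split the hyperedges by size, bound the size-$2$ contribution via the topological-minor argument and Euler's formula, bound the size-$\geq 3$ contribution via Euler's formula on the Levi graph, and finish by averaging with a case split on $n$. Your bookkeeping is slightly tighter (you use $\binom{r}{2}$ per large hyperedge and count the size-$2$ edges directly, yielding $(r^2+3)(n+g-2)$ in place of the paper's $5r^2(n+g)$), and you split cases at $n = 20r^2\sqrt g$ rather than $n = \sqrt g$, but these are cosmetic differences that do not change the argument.
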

\begin{proof}
For our proof, we assume that each edge in $\mathcal H$ contains at least two vertices, as edges of size one have no influence on $K(\mathcal H)$.
We write $L$ for the Levi graph of $\mathcal H$.
We partition $E(\mathcal H)$ into parts $E_2$ and $E_{\geq 3}$, where $E_2$ consists of all edges in $E(\mathcal H)$ of size $2$, and $E_{\geq 3}$ contains all other edges of $\mathcal H$. We aim to find upper bounds for $E_2$ and $E_{\geq 3}$.

First, we consider the edge set $E_2$. The graph $H_2 = (V(\mathcal H), E_2)$ is a topological minor of $L$, so the Euler genus of $H_2$ is at most $g$.
Therefore, by Euler's formula, $|E_2| \leq 3 |V(\mathcal H)| + 3g - 6$.

Next, we consider the edge set $E_{\geq 3}$. Let $L_{\geq 3}$ be the Levi graph of the hypergraph $(V(\mathcal H), E_{\geq 3})$, and consider an embedding of $L_{\geq 3}$ on a surface of minimum Euler genus.
Since $L_{\geq 3}$ is a subgraph of $L$, the Euler genus of $L_{\geq 3}$ is at most $g$. Thus, by Euler's formula,
    \begin{eqnarray*}
    |V(L_{\geq 3})| - |E(L_{\geq 3})| + |F(L_{\geq 3})| &\geq& 2 - g \\
    |V(L_{\geq 3})| - \frac{1}{2} |E(L_{\geq 3})| &\geq& 2 - g \\
    |V(\mathcal H)| + |E_{\geq 3}| - \frac{3}{2} |E_{\geq 3}| &\geq& 2-g \\
    |E_{\geq 3}| &<&  2|V(\mathcal H)|  + 2g
\end{eqnarray*}

Now, let $u_1, \dots, u_{|E(\mathcal H)|}$ be the vertices in $L$ corresponding to the edges of $\mathcal H$. We observe that 
\begin{eqnarray*}
|E(K(\mathcal H))| &\leq& \binom{\deg u_1}{2} + \binom{ \deg u_2}{2} + \dots + \binom{\deg u_{|E(L)|}}{2} \\
 &<& (\deg u_1)^2 + (\deg u_2)^2 + \dots + (\deg u_{|E(\mathcal H)|})^2 \\
 &\leq &  r^2 |E(\mathcal H)| = r^2 (|E_2| + |E_{\geq 3}|) \\
 & < & r^2 (5|V(\mathcal H)| + 5g ) .
\end{eqnarray*}
Now, if $|V(\mathcal H)| < \sqrt g$, then  $K(\mathcal H)$ clearly has a vertex of degree at most $20r^2 \sqrt g - 1$.
Otherwise, $|V(\mathcal H) | \geq \sqrt g$, and $K(\mathcal H)$ has a vertex of degree at most 
\[2|E(K(\mathcal H)) | / |V(\mathcal H)| < 2 r^2(5 + 5 \sqrt g) < 20 r^2 \sqrt g - 1.\]
\end{proof}
    Lemma \ref{lem:mindeg} gives us the following corollary.
\begin{lemma}
\label{lem:cliquegraph}
    If $\mathcal H$ is a hypergraph with edges of size at most $r$ and 
    Euler genus at most $g \geq 2$, then $\chi(K(\mathcal H)) \leq 20 r^2 \sqrt g $.
\end{lemma}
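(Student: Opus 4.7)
The plan is to deduce this from Lemma~\ref{lem:mindeg} via a standard degeneracy argument: I will show that $K(\mathcal H)$ is $(20r^2 \sqrt g - 1)$-degenerate, whence greedy coloring in the reverse of a degeneracy ordering yields $\chi(K(\mathcal H)) \leq 20 r^2 \sqrt g$.

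To prove degeneracy, it suffices to exhibit, in every induced subgraph of $K(\mathcal H)$, a vertex of degree at most $20 r^2 \sqrt g - 1$. The key observation is that induced subgraphs of $K(\mathcal H)$ correspond to clique graphs of sub-hypergraphs of $\mathcal H$. Given any $U \subseteq V(\mathcal H)$, I would define the sub-hypergraph $\mathcal H[U]$ whose vertex set is $U$ and whose edges are
\[
\{\, e \cap U : e \in E(\mathcal H),\ |e \cap U| \geq 2 \,\}.
\]
Then $K(\mathcal H[U])$ is precisely the induced subgraph of $K(\mathcal H)$ on $U$, and each edge of $\mathcal H[U]$ still has size at most $r$. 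Moreover, the Levi graph of $\mathcal H[U]$ is obtained from the Levi graph of $\mathcal H$ by deleting the vertices in $V(\mathcal H) \setminus U$ together with any edge-vertices left with fewer than two remaining neighbors; since this is a sequence of vertex and edge deletions, the Euler genus cannot increase, so $\mathcal H[U]$ has Euler genus at most $g$. Applying Lemma~\ref{lem:mindeg} to $\mathcal H[U]$ produces the required low-degree vertex in $K(\mathcal H[U])$.

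I do not expect a serious obstacle here; the only mild subtlety is verifying that the Euler genus of the Levi graph is preserved under restriction of the hypergraph to $U$, which is handled by the deletion argument above. With this in hand, the corollary reduces immediately to the minimum-degree bound already established.
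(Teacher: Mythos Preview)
Your proposal is correct and is essentially the same argument as the paper's: both deduce the coloring bound from Lemma~\ref{lem:mindeg} by observing that any vertex-induced subhypergraph of $\mathcal H$ still has Euler genus at most $g$ (its Levi graph being a subgraph of $L$), so every induced subgraph of $K(\mathcal H)$ has a vertex of degree at most $20r^2\sqrt g - 1$. The paper merely packages this as a minimal-counterexample argument removing one vertex at a time, whereas you state the degeneracy property directly; the content is identical.
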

\begin{proof}
    Suppose the lemma is false, and let $\mathcal H$ be the hypergraph on the fewest number of vertices for which the lemma does not hold. By Lemma \ref{lem:mindeg}, $K(\mathcal H)$ has a vertex $u$ of degree at most $20 r^2 \sqrt g - 1$.
    Consider the hypergraph $\mathcal H'$ on $V(\mathcal H) \setminus \{u\}$ with edge set 
    $\{e \setminus \{u\}: e \in E(\mathcal H)\}$. If we write $L$ for the Levi graph of $\mathcal H$ and $L'$ for the Levi graph of $\mathcal H'$, clearly $L'$ is a subgraph of $L$, so $L'$ has genus at most $g$. Hence, as $\mathcal H$ is a minimum counterexample, $K(\mathcal H')$ has a proper coloring with $20 r^2 \sqrt g  $ colors. 
    Furthermore, it is easy to check that $K(\mathcal H)  \setminus \{u\} = K(\mathcal H')$. Therefore, we may properly color $K(\mathcal H)  \setminus \{u\}$ with $20 r^2 \sqrt g $ colors, and since $u$ has at most $ 20 r^2 \sqrt g - 1$ neighbors, we may extend this coloring to $u$. Hence $\mathcal H$ is in fact not a counterexample, giving a contradiction and completing the proof.
\end{proof}

Now that we have an upper bound for the chromatic number of the clique graph of a hypergraph with bounded Euler genus, we are almost ready to prove Theorem \ref{thm:g_intro}. We need one final lemma.
\begin{lemma}
\label{lem:bdd_n}
    Let $k \geq 1$ be an integer.
    If $G$ is a graph of Euler genus at most $g \geq 2$ and minimum degree at least $k + 6$, then $G$ has fewer than $\frac{6g}{k}$ vertices.
\end{lemma}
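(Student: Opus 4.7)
The plan is to apply Euler's formula directly. Recall that a simple graph $G$ of Euler genus at most $g$ with $n$ vertices has a 2-cell embedding in a surface of Euler genus $g$ in which $|V(G)| - |E(G)| + |F(G)| = 2 - g$. Since every face is bounded by at least three edge-sides and each edge contributes to at most two face boundaries, one has $|F(G)| \leq \tfrac{2}{3}|E(G)|$, which yields the standard bound
\[
|E(G)| \leq 3|V(G)| - 6 + 3g.
\]

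On the other hand, the minimum degree hypothesis $\delta(G) \geq k+6$ combined with the handshake lemma gives
\[
|E(G)| \geq \frac{(k+6) |V(G)|}{2}.
\]

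Combining these two inequalities, I would write
\[
\frac{(k+6)|V(G)|}{2} \leq 3|V(G)| - 6 + 3g < 3|V(G)| + 3g,
\]
which simplifies to $\tfrac{k}{2}|V(G)| < 3g$, i.e., $|V(G)| < \tfrac{6g}{k}$, as desired.

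There is no real obstacle here; the lemma is essentially a one-line consequence of Euler's formula and the degree-sum identity. The only point worth a small care is that the edge bound $|E(G)| \leq 3|V(G)| - 6 + 3g$ holds uniformly for Euler genus (covering both orientable and nonorientable surfaces), which is exactly the setting of this paper.
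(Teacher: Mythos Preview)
Your proof is correct and follows essentially the same route as the paper: both combine Euler's formula (yielding $|E(G)| < 3|V(G)| + 3g$) with the handshake bound $2|E(G)| \geq (k+6)|V(G)|$ to conclude $|V(G)| < 6g/k$. The paper phrases the first step as $\sum_{v}(\deg(v)-6) < 6g$, but this is merely a cosmetic rearrangement of the same inequality.
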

\begin{proof}
    By Euler's formula, $|V(G)| - \frac{1}{3}|E(G)| > -g$. Rearranging this,
    \[\sum_{v \in V(G)} (\deg(v) - 6) < 6g.\]
    If each vertex has degree at least $k + 6$, then the number of terms in this sum is less than $\frac{6g}{k}$, completing the proof.
\end{proof}

Now, we are ready to prove Theorem \ref{thm:g_intro}, which states
that if $G$ is a graph of Euler genus $g$, then $\chinj(G) \leq (3 + o(1))g$.

\begin{proof}[Proof of Theorem \ref{thm:g_intro}:]
As the theorem statement is asymptotic, we assume that $g$ is sufficiently large.
We write $n = |V(G)|$.
We order the vertices of $G$ as follows. We iterate through $i = n, n - 1, \dots, 3,2,1$, and on each iteration we let $v_i$ be the vertex of minimum degree in $G \setminus \{v_{i+1}, \dots, v_{n}\}$. 
A classical result of Heawood \cite[Theorem 8.3.1]{MoharThomassen}
states that $G$ has degeneracy $\left \lfloor \frac{5 + \sqrt{1 + 24 g }}{2} \right \rfloor < 3 \sqrt g - 1$,
so each vertex $v_i$ has at most $3 \sqrt{g} - 1$ neighbors appearing before $v_i$ in the ordering. We orient $E(G)$ so that $v_i v_j$ is oriented from $v_i$ to $v_j$ if and only if $i > j$. 
We also give $G$ a proper coloring $\phi$ with $3 \sqrt{g} $ colors by iterating through $i=1,\dots,n$ and coloring $v_i$ with the least available positive integer (which has not already been used at a neighbor).

Next, we partition $V(G)$ into parts $V_1 = \{v_1, \dots, v_{\lceil \frac{6g}{\log g} \rceil - 1}\}$ and $V_2 = \{ v_{\lceil \frac{6g}{\log g} \rceil }, \dots, v_{n}\}$. For each vertex $v_i \in V_2$, $v_i$ is the minimum-degree vertex in $G[v_1,\dots,v_i]$; hence, by Lemma \ref{lem:bdd_n}, for each value $i \geq \lceil \frac{6g}{\log g} \rceil$, the vertex $v_i$ has at most $\log g + 5$ neighbors $v_j$ for which $j < i$. Therefore, for each vertex $v_i \in V_2$, $\phi(v_i) \leq \log g + 6$, and $v_i$ has out-degree at most $\log g + 5$.

Now, for each value $c \in \{1,\dots,\lfloor \log g \rfloor + 6\}$, consider the independent set $X_c = \phi^{-1}(c) \cap V_2$. By the previous paragraph, each vertex in $X_c$ has out-degree at most $d = \log g + 5$. We define a bipartite graph $L_c$ on $V(G)$ consisting exactly of the edges outgoing from vertices in $X_c$. We observe that $L_c$ has genus at most $g$, and $L_c$ is the Levi graph of a hypergraph $\mathcal H_c$.
We observe further that the clique graph $K(\mathcal H_c)$ is a graph on $V(G) \setminus X_c$ defined such that two vertices $u,v \in V(G) \setminus X_c$ are adjacent in $K(\mathcal H_c)$ 
if and only if there exists $x \in X_c$ such that $u,v \in N^+(x)$. Lemma \ref{lem:cliquegraph}, $\chi(K(\mathcal H_c)) \leq 20 (\log g + 5)^2 \sqrt g$. Notice that $K(\mathcal{H}_c)$ is exactly the graph $H$ for $X_c=X$ defined in Lemma~\ref{lem:colorclass}. Let $H_c = K(\mathcal{H}_c)$. Hence, as $g$ is sufficiently large,  $\log \chi(H_c) <  \log g$.

Thus, we may apply Lemma \ref{lem:colorclass} to color all arcs outgoing from $X_c$ with $O(\log^3g \sqrt {g})$ colors, so that each color class induces a star forest in $G$. If we repeat this process for all color classes $c \in \{1,\dots,\lfloor \log g \rfloor + 6\}$, we use a total of $O(\log^4g \sqrt {g})$ colors
to partition all arcs outgoing from $V_2$ into star forests which are induced in $G$.

After coloring all arcs outgoing from $V_2$, the only uncolored edges in $G$ are those in $G[V_1]$. Hence, to finish our injective edge coloring of $G$, it suffices to use a new color for each edge of $G[V_1]$. To this end, we count the edges in $G[V_1]$.

By Euler's formula, 
    \[\sum_{v \in V_1} (\deg_{G[V_1]}(v) - 6) < 6g.\]
    Since $|V_1| < \frac{6g}{\log g}$, this implies that 
    \[2 |E(G[V_1])| = \sum_{v \in V_1} \deg_{G[V_1]}(v)  < 6g + \frac{36g}{\log g} .\]
    Hence, $G[V_1]$ has at most $(3+o(1)) g$ edges, and hence we can finish our injective edge-coloring of $G$ with at most $(3+o(1))g$ additional colors. 
    Since we used $o(g)$ colors to color the arcs of $G$ outgoing from $V_2$, we use in total $(3+o(1))g$ colors to complete an injective edge coloring of $G$.
\end{proof}

\section{The oriented chromatic number of graphs with bounded genus}
\label{sec:oriented}

In this section, we aim to prove Theorem \ref{thm:g_oriented_intro}, 
which states that the oriented chromatic number of a graph $G$ is bounded above by a polynomial function of the Euler genus of $G$. Our general strategy will be 
to show that $G$ has a certain spanning subgraph with small injective chromatic index, and then to argue that this implies an upper bound on the oriented chromatic number of $G$.

First, we establish a relationship between the injective chromatic index and the oriented chromatic number of a graph.
We observed in the inequality (\ref{eqn:XoUB}) that the oriented chromatic number of a graph is bounded above by a double exponential function of the graph's injective chromatic index. 
The following lemma shows that this upper bound can in fact be improved to an exponential function.

\begin{lemma}
\label{lem:exp}
    For every graph $G$, 
    if $\chinj(G) = k$, then $\chi_o(G) \leq 4^k$.
\end{lemma}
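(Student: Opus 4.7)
The plan is to construct an oriented coloring of any orientation $\vec{G}$ of $G$ using $4^k$ colors, by labelling each vertex with a vector of length $k$ over a 4-letter alphabet that records, for each color of a fixed injective edge coloring, the vertex's role in the corresponding induced star forest.

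Fix an injective edge coloring $\psi : E(G) \to \{1, \ldots, k\}$, so that each color class $F_c$ is an induced star forest in $G$. Within each $F_c$, designate a center for every star (with an arbitrary tiebreaking rule for single-edge stars). Fix an arbitrary orientation $\vec{G}$ of $G$. For each vertex $v$ and each color $c$, define $\ell_c(v) \in \{N, C, \mathrm{In}, \mathrm{Out}\}$ by setting $\ell_c(v) = N$ if $v$ is incident to no edge of color $c$; $\ell_c(v) = C$ if $v$ is the center of its star in $F_c$; $\ell_c(v) = \mathrm{In}$ if $v$ is a leaf in $F_c$ whose unique color-$c$ edge is directed into $v$; and $\ell_c(v) = \mathrm{Out}$ if $v$ is a leaf in $F_c$ whose unique color-$c$ edge is directed out of $v$. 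Then $\phi(v) = (\ell_1(v), \ldots, \ell_k(v))$ uses at most $4^k$ colors. Properness is immediate: for any edge $uv$ of color $c$, exactly one of $u, v$ is a center and the other is a leaf in $F_c$, so $\ell_c(u) \neq \ell_c(v)$.

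The main step is to verify the oriented condition. Suppose for contradiction that arcs $u \to v$ and $v' \to u'$ in $\vec{G}$ satisfy $\phi(u) = \phi(u')$ and $\phi(v) = \phi(v')$, and let $c = \psi(uv)$ and $c' = \psi(u'v')$. By symmetry, assume the coordinate $c$ of $\phi(u)$ is $C$ and the coordinate $c$ of $\phi(v)$ is $\mathrm{In}$; then the same must hold at $u'$ and $v'$, so $u'$ is a center of some color-$c$ star $S_1$ and $v'$ is a leaf of some color-$c$ star $S_2$. If $c = c'$, then the edge $u'v'$ lies in $F_c$, and the roles of $u', v'$ force it to be oriented from center to leaf, i.e.\ $u' \to v'$, contradicting the arc $v' \to u'$. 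If $c \neq c'$, then $u'$ has a color-$c$ edge to some leaf $\ell''$ of $S_1$, and $v'$ has a color-$c$ edge from the center $w'$ of $S_2$; the edge $u'v'$ (of color $c' \neq c$) is then a third edge joining an endpoint of $\{u', \ell''\}$ to an endpoint of $\{w', v'\}$, contradicting the defining property of an injective edge coloring, unless these two color-$c$ edges coincide, in which case $S_1 = S_2$ and the direction argument from the $c = c'$ case applies again.

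The hard part is precisely this verification: one must show that the injective-edge-coloring condition is exactly strong enough to rule out the ``two color-$c$ edges plus a third connecting edge'' configuration that would otherwise allow a $2$-cycle in the color digraph. Degenerate subcases (single-edge stars, coinciding color-$c$ edges) all reduce to the same center-to-leaf direction argument, so no further work is needed.
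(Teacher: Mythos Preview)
Your proof is correct and achieves the same $4^k$ bound, but through a different vertex encoding than the paper's. The paper assigns each vertex $v$ the pair $(S_v^+,S_v^-)\in 2^{[k]}\times 2^{[k]}$, where $S_v^+$ (respectively $S_v^-$) is the set of edge-colors on arcs leaving (respectively entering) $v$; the oriented condition then follows from the single observation that for any arc $u\to v$ one has $S_u^-\cap S_v^+=\emptyset$, since a shared color there would give two same-colored edges joined by $uv$. Your encoding instead records, per color, whether the vertex is absent, a designated star center, or a leaf with its unique edge incoming or outgoing. These are genuinely different partitions into four states: the paper distinguishes centers according to their in/out pattern but does not separate a center having only outgoing color-$c$ arcs from an $\mathrm{Out}$-leaf, while you do the reverse. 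Your route requires the center/leaf case split and the check that the two auxiliary color-$c$ edges are distinct (your ``unless the edges coincide'' clause is actually vacuous when $c\neq c'$, since coincidence would force the edge $u'v'$ itself to carry color $c$); the paper's set-intersection argument sidesteps both. The trade-off is that your version makes the induced-star-forest structure visually explicit, whereas the paper's is shorter and avoids choosing centers.
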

\begin{proof}
    Let $G$ be a graph with injective chromatic index $k$, and let $\psi:E(G) \rightarrow \{1,\dots,k\}$ be an injective edge coloring of $G$. 
    Suppose that $E(G)$ has some orientation.
    We give $V(G)$ an oriented coloring $\phi:V(G) \rightarrow 2^{\{1,\dots,k\}} \times  2^{\{1,\dots,k\}}$ by assigning each vertex $v \in V(G)$ the color $\phi(v) = (S_v^+,S_v^-)$, where $S_v^+$ is the set of colors appearing at arcs outgoing from $v$, and $S_v^-$ is the set of colors appearing at arcs going into $v$. We argue that $\phi$ is an oriented coloring.

    First, we argue that $\phi$ is proper. Indeed, suppose that there exists an arc $uv$ in $G$ such that $\phi(u) = \phi(v)$. Since $\psi(uv) \in S_v^- \cap S_u^+$, the equality $(S_u^+,S_u^-) = (S_v^+,S_v^-)$ implies that $\psi(uv) \in S_v^+ \cap S_u^-$. Then there must exist an arc $a_1$ going into $u$ of color $\psi(uv)$ as well as an arc $a_2$ outgoing from $v$ of color $\psi(uv)$, which is a contradiction, as $a_1$ and $a_2$ are either at distance $1$ or part of a common triangle. Hence, $\phi$ is proper.

    Next, suppose that there exist two arcs $uv$ and $v'u'$ in $G$ so that $\phi(u) = \phi(u')$ and $\phi(v) = \phi(v')$. Since $\psi$ is an injective coloring, it must hold that $S_u^- \cap S_v^+ = \emptyset$. Then, since $(\phi(u),\phi(v)) = (\phi(u'),\phi(v'))$, this implies that $S_{u'}^- \cap S_{v'}^+ = \emptyset$. However, this is a contradiction, since $\psi(v'u') \in S_{u'}^- \cap S_{v'}^+ $. Hence, $\phi$ is an oriented coloring.
\end{proof}

We need one more lemma about the oriented chromatic number before proving Theorem \ref{thm:g_oriented_intro}.

\begin{lemma}
    \label{lem:minusE}
Let $G$ be a graph, and let $U \subseteq V(G)$. Then $\chi_o(G) \leq |U| + \chi_o(G \setminus E(G[U]))$.
\end{lemma}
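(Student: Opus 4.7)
The plan is to build an oriented coloring of $\vec G$ by assigning each vertex of $U$ its own private color used nowhere else, and recycling an oriented coloring of the rest. Fix any orientation $\vec G$ of $G$, set $\vec H := \vec G \setminus E(G[U])$, and let $\psi:V(G) \to C$ be an oriented coloring of $\vec H$ with $|C| = \chi_o(G \setminus E(G[U]))$, where $C$ is chosen disjoint from $\{1,\ldots,|U|\}$. Define $\phi:V(G) \to C \cup \{1,\ldots,|U|\}$ by restricting $\phi$ to an arbitrary bijection $U \to \{1,\ldots,|U|\}$ and setting $\phi(v) = \psi(v)$ for every $v \in V(G) \setminus U$. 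This uses $|U| + \chi_o(G \setminus E(G[U]))$ colors in total, so the task reduces to showing that $\phi$ is an oriented coloring of $\vec G$.

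Properness of $\phi$ is essentially automatic: two $U$-vertices receive distinct private colors; a $U$-vertex and a non-$U$-vertex are drawn from disjoint palettes; and two non-$U$-vertices joined in $G$ remain joined in $G \setminus E(G[U])$, hence are separated by $\psi$. The substantive content lies in verifying the no-matching-pair condition. I would argue by contradiction: suppose arcs $uv$ and $v'u'$ of $\vec G$ satisfy $\phi(u) = \phi(u')$ and $\phi(v) = \phi(v')$. The key observation is that each color in $\{1,\ldots,|U|\}$ is assigned to a unique vertex of $U$, so any endpoint of these arcs that lies in $U$ forces its $\phi$-partner to be the very same vertex of $U$.

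A short case analysis on which of $u, u', v, v'$ lie in $U$ then finishes things. If all four lie in $U$, the two arcs collapse to $uv$ and $vu$, which is impossible in an orientation. In every other case, enough endpoints lie outside $U$ that both arcs in question belong to $\vec H$, at which point the assumed $\phi$-matching translates directly to a $\psi$-matching of the corresponding colors, contradicting the fact that $\psi$ is an oriented coloring of $\vec H$. The step I expect to require the most care — and which I regard as the main obstacle — is confirming in each subcase that the two arcs involved really do lie in $\vec H$; this reduces to the observation that an arc is deleted in forming $\vec H$ precisely when \emph{both} of its endpoints lie in $U$, so as soon as at least one endpoint of an arc avoids $U$, the arc survives in $\vec H$. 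Since the construction works for every orientation of $G$, the bound $\chi_o(G) \leq |U| + \chi_o(G \setminus E(G[U]))$ follows.
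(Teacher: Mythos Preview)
Your proposal is correct and follows essentially the same approach as the paper: assign each vertex of $U$ a fresh private color, retain an optimal oriented coloring of $G \setminus E(G[U])$ on the remaining vertices, and verify the oriented-coloring axioms via a short case analysis exploiting that each private color identifies its vertex uniquely. The paper's argument and yours differ only in notation and the level of detail in the case split.
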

\begin{proof}
    Consider a fixed orientation of $E(G)$.
    We give $G$ a proper oriented coloring as follows. First, we define an oriented coloring $\phi$ of $G \setminus E(G[U])$ that uses $\chi_o(G \setminus E(G[U]))$ colors. Then, we define a new coloring $\psi$ by recoloring each vertex of $U$ with a new unique color. We show that $\psi$ is an oriented coloring of $G$.

    We first claim that $\psi$ is a proper coloring. Indeed, suppose that there exist two adjacent vertices $u,v \in V(G)$ so that $\psi(u) = \psi(v)$. Since $\phi$ is a proper coloring, it must follow without loss of generality that $u \in U$. However, then $u$ is the only vertex with the color $\psi(u)$, so $\psi(u) \neq \psi(v)$, a contradiction.

    Next, suppose that there exist arcs $uv$ and $v'u'$ in $G$ so that $(\psi(u), \psi(v)) =(  \psi(u'), \psi(v'))$. Since $\phi$ is an oriented coloring, it follows that one of $u,v,u',v'$ belongs to $U$. If $u \in U$, then since $u$ is the only vertex with color $\psi(u)$, it follows that $u = u'$. If $v = v'$, then $G$ contains a digon, a contradiction. Therefore, since $\psi(v) = \psi(v')$, it follows that $v,v' \not \in U$. Hence, $(\psi(u), \psi(v)) =(\psi(u'), \psi(v'))$, contradicts either assumption that $\phi$ is an oriented coloring or that $G$ contains no digon. Therefore, $\psi$ is an oriented coloring.
\end{proof}

Now, we are ready to prove Theorem \ref{thm:g_oriented_intro}, which states that 
if $G$ is a graph with sufficiently large Euler genus $g$,
then $\chi_o(G) \leq g^{6400}$.
This polynomial upper bound in $g$ gives an affirmative answer to Conjecture \ref{conj:AS} and in fact greatly improves the bound stated in this conjecture.

\begin{proof}[Proof of Theorem \ref{thm:g_oriented_intro}:]
    We let $G$ be a graph of Euler genus $g$, and we assume that $g$ is sufficiently large.
    Rather than 
    bounding $\chi_o(G)$ by considering an explicit orientation of $E(G)$,
    we will bound $\chi_o(G)$ 
    by estimating the injective chromatic index of a certain subgraph of $G$ and then
    using Lemmas \ref{lem:exp} and \ref{lem:minusE}.

We write $n = |V(G)|$, and we order $V(G)$ as follows. We iterate through $i = n, n - 1, \dots, 3,2,1$, and on each iteration we let $v_i$ be the vertex of minimum degree in $G \setminus \{v_{i+1}, \dots, v_{n}\}$. 
We then give $G$ a proper coloring $\phi$ by iterating through $i=1,\dots,n$ and coloring $v_i$ with the least available positive integer which has not already been used at a neighbor.
Next, we partition $V(G)$ into parts $V_1 = \{v_1, \dots, v_{6g}\}$ and $V_2 = \{v_{6g+1}, \dots, v_{n}\}$. For each vertex $v_i \in V_2$, $v_i$ is the minimum-degree vertex in $G[v_1,\dots,v_i]$; hence, by Lemma
\ref{lem:bdd_n}, for each value $i > 6g$, the vertex $v_i$ has at most $6$ neighbors $v_j$ for which $j < i$. Therefore, for each vertex $v_i \in V_2$, $\phi(v_i) \leq 7$. We also orient $E(G)$ so that each edge $v_i v_j$ is oriented from $v_i$ to $v_j$ if and only if $i > j$. Note that under this orientation, each vertex $v_i \in V_2$ has out-degree at most $6$. 

Now, we define $G' = G \setminus E(G[V_1])$, and we aim to bound $\chinj(G')$. 
For each color $c \in \{1,\dots,7\}$, let $X_c \subseteq V_2$ be the independent set consisting of those vertices in $V_2$ of color $c$. 
We will apply Lemma \ref{lem:colorclass} 
to partition $A^+(X_c)$ into induced star forests.
We write $\mathcal H_c$ for the hypergraph on $V(G') \setminus X_c$
with the edge set $\{N^+(x): x \in X_c\}$, and we write $k = \chi(K(\mathcal H_c))$.
By Lemma \ref{lem:cliquegraph},
$k \leq 20 \cdot 6^2 \sqrt g $.
Since $G'$ is $6$-degenerate and has maximum out-degree $6$,
Lemma \ref{lem:colorclass} tells us that $A^+(X_c)$ can be partitioned into
$13 \lceil 36e \log k \rceil \leq (234e + o(1)) \log g$ star forests which are induced in $G'$.
 By repeating this process for all $7$ color classes of $G'$, we find an injective edge-coloring of $G'$ using at most $(1638e + o(1)) \log g$ colors. 

Finally, by Lemma \ref{lem:exp}, $\chi_o(G') \leq 4^{(1638e + o(1)) \log g} < g^{6400} - 6g$ for large $g$. Since $G' = G \setminus E(G[V_1])$, it then follows from Lemma \ref{lem:minusE}
that $\chi_o(G) \leq \chi_o(G') + |V_1| \leq g^{6400}$, completing the proof.
\end{proof}

We conclude this section 
with a random construction which shows the existence of oriented graphs with large Euler genus $g$ and oriented chromatic number at least $g^{\frac{2}{3} - o(1)}$.
Rather than directly estimating the oriented chromatic number of a random construction $G$, we instead 
consider its $2$-dipath chromatic number $\chi_2(G)$.
The random construction that we use 
is a standard method 
for constructing graphs for which various coloring parameters are large,
such as acyclic chromatic number \cite{AMR,AMS}, star chromatic number \cite{FRR}, and 
nonrepetitive chromatic number \cite{AlonNR}.
This construction shows us that the exponent $6400$ in Theorem \ref{thm:g_oriented_intro} is correct within a factor of less than $10000$.

\begin{proposition}\label{prop: oriented_lower}
    There exists a constant $c > 0$ such that for each value $g \geq 2$, there exists an oriented graph $G$ of Euler genus $g$ for which $\chi_o(G) \geq \chi_2(G)\geq   c\left ( \frac{g^2}{\log g} \right )^{1/3}  $. 
    
\end{proposition}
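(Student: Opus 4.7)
My plan is to use the probabilistic method: construct a random graph $G = G(n,p)$ whose Euler genus is a.a.s.\ bounded by the target $g$, then exhibit a random orientation whose $2$-dipath chromatic number is a.a.s.\ $\Omega((g^2/\log g)^{1/3})$. This mirrors the random-graph lower bound constructions cited in the excerpt for acyclic, star, and nonrepetitive chromatic numbers.

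Take $n$ large and set $p = C\sqrt{\log n/n}$ for a suitable constant $C$. Chernoff gives $|E(G)| = \Theta(n^{3/2}\sqrt{\log n})$ a.a.s.; since the Euler genus of any graph is bounded above by its number of edges, we obtain $g(G) = O(n^{3/2}\sqrt{\log n})$ a.a.s., so $(g^2/\log g)^{1/3} = O(n)$. It therefore suffices to exhibit an orientation $\vec G$ of $G$ with $\chi_2(\vec G) \geq c_0 n$ for a small constant $c_0 > 0$ (depending on $C$). I would orient $E(G)$ uniformly at random and independently, set $k := c_0 n$, and argue that a.a.s.\ no proper $k$-coloring of $V(G)$ is a $2$-dipath coloring of $\vec G$ (restricting to proper $f$ suffices because every $2$-dipath coloring is proper).

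For a proper coloring $f$, call a path $uvw$ in $G$ a \emph{conflict path} of $f$ if $f(u) = f(w)$. By convexity of $\binom{\cdot}{2}$ and linearity of expectation, every proper $k$-coloring has at least $\Omega(n^3 p^2/k) = \Omega(n\log n)$ conflict paths in expectation; a Chernoff-type concentration argument together with a union bound over colorings makes this lower bound uniform over all proper $f$ a.a.s. Since each edge of $G$ lies in expectation in at most $O(np/k) = o(1)$ conflict paths, a maximum-of-Poissons argument shows that a.a.s.\ no edge lies in more than $O(1)$ conflict paths of any given $f$. A standard greedy extraction then produces $\Omega(n\log n)$ edge-disjoint conflict paths for every proper $f$. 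In the random orientation, each edge-disjoint conflict path independently becomes a monochromatic directed $2$-path with probability $1/2$, so the probability that $f$ survives as a $2$-dipath coloring of $\vec G$ is at most $2^{-\Omega(n\log n)}$.

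A union bound over the at most $k^n = n^{n(1+o(1))}$ proper $k$-colorings yields a total failure probability of $n^{n(1+o(1))} \cdot 2^{-\Omega(n\log n)}$, which tends to zero once the ratio $C^2/c_0$ is chosen large enough (a concrete calculation shows $C^2/c_0 > 14\ln 2$ suffices). Thus a.a.s.\ $\chi_2(\vec G) > k = c_0 n = \Omega((g^2/\log g)^{1/3})$, and padding $G$ with a small graph of the appropriate Euler genus handles values of $g$ not of the form $g(G)$. The main obstacle is making the per-coloring failure probability $2^{-\Omega(n\log n)}$ overcome the $k^n = n^{\Theta(n)}$ colorings in the union bound; this forces the expected conflict-path count per coloring to scale like $n\log n$, which in turn requires the $\sqrt{\log n}$ factor in $p$ and inflates the Euler genus by $\sqrt{\log n}$, producing exactly the $(\log g)^{-1/3}$ loss in the stated lower bound. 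Uniform concentration across colorings and edge-disjoint extraction are routine once this scaling is identified.
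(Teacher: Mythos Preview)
Your approach matches the paper's: both use $G(n,p)$ with $p = \Theta(\sqrt{(\log n)/n})$ together with a uniformly random orientation, then argue via a union bound over all $k$-colorings that $\chi_2(\vec G) > cn$. However, your extraction step has a genuine gap. You assert that a.a.s.\ ``no edge lies in more than $O(1)$ conflict paths of any given $f$,'' but this fails for unbalanced colorings. Once $G$ is revealed, an adversarial proper $k$-coloring can place the color $f(u)$ on a large independent subset of $N_G(v)\setminus N_G(u)$; such a subset has size $\Theta(\sqrt{n\log n})$ a.a.s., which forces the single edge $uv$ into $\Theta(\sqrt{n\log n})$ conflict paths of that $f$. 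Because the final union bound must absorb $k^n = n^{\Theta(n)}$ colorings, no per-coloring maximum-of-Poissons tail can deliver an $O(1)$ bound that holds for \emph{every} $f$ simultaneously. Without that bound, the greedy extraction of $\Omega(n\log n)$ edge-disjoint conflict paths is unjustified, and the $2^{-\Omega(n\log n)}$ survival estimate does not follow as written.

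The paper avoids extraction entirely with a pairing trick that manufactures independence. After deleting at most one vertex from each color class, pair up the remaining vertices within classes; regardless of class sizes this yields $\Omega(n)$ monochromatic pairs. For any two distinct pairs $\{u,v\}$ and $\{u',v'\}$, the event ``arcs $uu'$ and $u'v$ are both present in $\vec G$'' has probability $p^2/4$ and, when it occurs, witnesses a monochromatic directed $2$-path. Crucially, the $\Omega(n^2)$ such events use pairwise-disjoint arc pairs and are therefore mutually independent in the random oriented model. This gives $\Pr(f\text{ survives}) \le (1-p^2/4)^{\Omega(n^2)} \le \exp(-\Omega(n\log n))$ directly, with no concentration step and no edge-disjoint extraction required. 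Your outline is repairable along these lines, but the independence/edge-disjointness device is the missing idea.
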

\begin{proof}
    We may assume that $g$ is sufficiently large, 
    as the statement holds for small values of $g$ by letting $c$ be sufficiently small.
    We set $p = \sqrt{\frac{150\log n}{n}}$, and we choose $n$ to be as large as possible so that $n$ is even and $pn^2 \leq g$.
    
    We let $G$ be an oriented graph on $n$ vertices
    which is randomly constructed as follows. For each pair of vertices $u$ and $v$, we join $u$ and $v$ by an edge $e$ independently with probability $p$, and if $e$ is added to $G$, we give $e$ one of the two possible orientations uniformly at random.
    By a Chernoff bound (see e.g.~\cite[Chapter 4]{Mitzenmacher}), it holds a.a.s.~that $|E(G)| < pn^2$, and hence $G$ a.a.s.~has Euler genus less than $g$.

    We aim to show that a.a.s., $\chi_2(G) > n/2$.
    To this end, we consider a 
    fixed
    coloring $\phi$ of $V(G)$ with $n/2$ colors, and we aim to estimate the probability that $\phi$ is a proper $2$-dipath coloring of $G$. 
    We obtain a subgraph $G'$ of $G$ by deleting at most one vertex from each color class of $\phi$ so that each color class of $G'$ has an even number of vertices. Clearly, $|V(G')| \geq n/2$. 
    We partition 
    each color class of $G'$ into vertex sets of size $2$, which gives a partition $\Pi$ of $V(G')$ in which each part $P \in \Pi$ consists of exactly two vertices which have the same color.
    We consider two distinct parts
    $P = \{u,v\}$ and $P' = \{u',v'\}$ in $\Pi$.
    We observe that if 
    $G'$ contains the arcs $u u'$ and $u'v$, then
    $\phi$ is not a $2$-dipath coloring of $G$. The probability that $G'$ contains both arcs $uu'$ and $u'v$ is $p^2 / 4$, and the number of ways to choose two 
    distinct parts $P,P' \in \Pi$
    is at least $\binom{\lceil n/4 \rceil }{2} > \frac{1}{36}n^2$. Therefore, the probability that $\phi$ is a
    proper $2$-dipath coloring of $G$ is at most 
    \[(1 - p^2 / 4)^{\frac{1}{36}n^2} < \exp \left ( - \frac{1}{144} (pn)^2 \right ).\]
    Therefore, by a union bound, the probability that $G$ has a $2$-dipath coloring is less than
    \[n^n \exp \left ( - \frac{1}{144} (pn)^2 \right ) = \exp \left ( n \log n - \frac{1}{144} (pn)^2 \right ) = o(1).\]
    Hence, $G$ a.a.s.~has no proper $2$-dipath coloring using $n/2$ colors. Therefore, a.a.s.,
    \[\chi_2(G) > n/2 = \frac{1}{2} \left ( \frac{pn^2}{\sqrt{150 \log n} }\right )^{2/3} =  \Omega\left ( \left ( \frac{g^2}{\log g} \right )^{1/3}  \right ).\]
    Finally, we may increase the Euler genus of $G$ to exactly $g$ without decreasing
    its $2$-dipath chromatic number
    by adding sufficiently many disjoint copies of $K_5$, completing the proof.
\end{proof}

\section{Further improving the oriented chromatic number for bound genus}
\label{sec:Improving_oriented}

Having established in Section~\ref{sec:oriented} that the oriented chromatic number of a graph $G$ is bounded by a polynomial function of its Euler genus $g$,
we turn our attention toward reducing the degree
of this polynomial. 
While we are unable to substantially improve the bound of $\chi_o(G) \leq g^{6400}$ given in Theorem~\ref{thm:g_oriented_intro},
we 
show that in order to improve the exponent of $6400$, it is sufficient to establish an improved upper bound for $\chi_2(G)$.
Unlike the oriented coloring problem,
which has global constraints,
the constraints of the $2$-dipath coloring problem are entirely local,
which often
makes $\chi_2(G)$ much easier to estimate than $\chi_o(G)$.
With this in mind, our main goal in this section is to prove
Theorem \ref{thm:g_improvement}, which shows
that an upper bound on $\chi_2(G)$ in terms of $g$ implies a similar upper bound on $\chi_o(G)$.

We borrow the following notation and definition from \cite{clow2023oriented}.
Given an oriented graph $G$, a vertex $v \in V(G)$, and an ordered vertex set 
$U = (u_1, \dots, u_t) \subseteq N(v)$, 
we write $F(U,v,G)$ for the vector in $\{-1,1\}^t$
whose $i$th entry is $1$ if $vu_i$ is an arc of $G$, and whose $i$th entry is $-1$ if $u_i v$ is an arc of $G$. 
Now, suppose $H$ is an oriented $k$-partite graph with exactly $N$ vertices in each partite set.
Let the partite sets of $H$ be called $P_1, \dots, P_k$.
We say that $H$ is \emph{$(k,d,N)$-full} if 
the following holds:
for each value $i \in  [k]$,
each
ordered subset $U = (u_1, \dots, u_d) \subseteq \bigcup_{j \neq i} P_j$ of size $d$, and each vector $q \in \{-1,1\}^d$,
there exists
a vertex $x \in P_i$
such that $F(U,x,H) = q$.

\begin{lemma}
\label{lem: (k,d,N)}
For each value $d \geq 2$ and $k \geq 5$, there exists a $(k,d, \lceil 8^d \log k \rceil )$-full graph.
\end{lemma}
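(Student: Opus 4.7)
The plan is to prove this existence statement by the standard probabilistic method. I would set $N = \lceil 8^d \log k \rceil$, take disjoint vertex sets $P_1, \ldots, P_k$ each of size $N$, and let $H$ be a random oriented graph formed by orienting each edge of the complete $k$-partite graph on these parts independently and uniformly at random in one of its two directions. The goal is to show that with positive probability, $H$ is $(k,d,N)$-full.

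For each triple $(i, U, q)$ with $i \in [k]$, $U = (u_1, \ldots, u_d)$ an ordered tuple of $d$ distinct vertices in $\bigcup_{j \neq i} P_j$, and $q \in \{-1, 1\}^d$, let $B_{i, U, q}$ denote the bad event that no $x \in P_i$ satisfies $F(U, x, H) = q$. For each fixed $x \in P_i$, the $d$ edges $\{x, u_1\}, \ldots, \{x, u_d\}$ are distinct, so their orientations are $d$ independent uniform bits, giving $\Pr[F(U, x, H) = q] = 2^{-d}$. Across different vertices $x \in P_i$ these events depend on disjoint edge sets, hence are independent, so
\[
    \Pr[B_{i, U, q}] = (1 - 2^{-d})^N \leq \exp(-N / 2^d) \leq k^{-4^d}
\]
by our choice of $N$. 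The number of bad events is at most $k \cdot ((k-1)N)^d \cdot 2^d \leq k^{d+1}(2N)^d$, so a union bound bounds the probability that some bad event occurs by $k^{d+1}(2N)^d \cdot k^{-4^d}$.

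It remains to verify that this union-bound quantity is strictly less than $1$ for every $d \geq 2$ and $k \geq 5$. After taking logarithms, this reduces to the elementary inequality $(4^d - d - 1) \log k > d \log(2N)$. The right-hand side expands to $d \log 2 + 3d^2 \log 2 + d \log \log k + O(1)$, which is polynomial in $d$ and grows as $\log \log k$ in $k$, whereas the left-hand side is exponential in $d$ and linear in $\log k$. Thus the inequality is comfortable for large $d$ or $k$, and the main obstacle is confirming it at the boundary case $d = 2$, $k = 5$, where the two sides come within a small constant factor. Once this finite arithmetic check is carried out, any realization of $H$ in which no bad event occurs is a $(k, d, N)$-full graph, completing the proof. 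The probabilistic construction and union bound themselves are entirely standard; the entire difficulty lies in the tight parameter verification at the boundary.
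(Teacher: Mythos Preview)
Your proposal is correct and essentially identical to the paper's proof: both take a uniformly random orientation of the complete $k$-partite graph with parts of size $N$, bound each bad-event probability by $\exp(-N/2^d)$, and finish with the same union bound. The only difference is that the paper carries out the parameter verification explicitly (showing the bound is monotone in $k$ and then checking $k=5$ for all $d\geq 2$), whereas you defer it; note that your reduction to the single point $(d,k)=(2,5)$ implicitly requires a monotonicity argument in both variables, which is routine but should be stated rather than assumed.
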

\begin{proof}
    We let $N =  \lceil 8^d \log k \rceil $.
    We let $H$ be a random orientation of the complete $k$-partite graph $K_{N,\dots,N}$. We consider a fixed value $i \in \{1,\dots, k\}$ and a
    fixed ordered subset
    $U = (u_1, \dots, u_d) \subseteq \bigcup_{j \neq i} P_j$, as well as a fixed vector $q \in \{-1,1\}^d$.
    The probability that a given vertex $x \in P_i$ satisfies $F(U,x,G) = q$ is $2^{-d}$, so the probability that no vertex $v \in P_i$ satisfies $F(U,x,G) = q$ is at most $(1-2^{-d})^N < \exp(-2^{-d} N)$. Therefore, taking a union bound over all possible values $i \in [k]$, all ordered subsets $U \subseteq \bigcup_{j \neq i} P_j$ of size $d$, and all vectors $q \in \{-1,1\}^d$, the probability $p$ that $H$ is not $(k,d,N)$-full satisfies
    \[ p \leq k \cdot (kN)^d 2^d \exp(-2^{-d}N).\]
    The rest of the proof aims to show that $p < 1$. We observe that 
    \begin{eqnarray*}
    \log p &<& (d+1)(\log k + \log N + \log 2) - \frac{N}{2^d} \\
    &= & (d+1) (\log k + \log
    \lceil 8^d \log k \rceil + \log 2) - \frac{\lceil 8^d \log k \rceil }{2^d} \\ 
    &< & (d+1)(2 \log k + \log 8^d + \log 2) - 4^d \log k \\
    &=& (d+1) \left ( (2 - \frac{4^d}{d+1} ) \log k + (3d + 1) \log 2 \right )
    \end{eqnarray*}
The $\log k$ in the last expression has a negative coefficient for all $d \geq 2$, and therefore this expression is decreasing with respect to $k$.
 Hence, 
\[\log p < (d+1) \left ( (2 - \frac{4^d}{d+1} ) \log 5 + (3d + 1) \log 2 \right ),\]
which is negative for all $d \geq 2$.
Therefore, $p < 1$, and thus with positive probability, the oriented graph $H$ which we have constructed is $(k,d, \lceil 8^d \log k \rceil )$-full.
\end{proof}

Given two oriented graphs $G$ and $H$,
a function $h: V(G) \rightarrow V(H)$ is an oriented homomorphism if $(h(u),h(v)) \in A(H)$ for all $(u,v) \in A(G)$. That is, an oriented homomorphism is a connectivity-preserving and orientation-preserving map between the vertex sets of $G$ and $H$.
Sopena \cite{Sopena95} showed that $\chi_o(G) \leq k$ if and only if there exists an oriented homomorphism from $G$ to a graph on $k$ vertices.

The existence of $(k,d,N)$-full graphs is useful because their rich structure makes them a particularly well behaved target of an oriented homomorphism. In particular, Clow and Stacho \cite{clow2023oriented} show (in the proof of Theorem~2.1 in \cite{clow2023oriented}) that every $d$-degenerate graph satisfying $\chi_2(G) \leq k$ has an oriented homomorphism to every $(k,d,N)$-full graph. For completeness, we 
describe the same idea
again here in order to prove the following lemma.

\begin{lemma}
    \label{lem:XoX2}
    Let $d \geq 2$ and $k \geq 5$. If $G$ is a $d$-degenerate graph for which $\chi_2(G) = k$, then $\chi_o(G) \leq k \lceil 8^d \log k \rceil$.
\end{lemma}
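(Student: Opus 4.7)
The plan is to reduce the statement to the existence of an oriented homomorphism from any orientation $\vec G$ of $G$ into a single $(k,d,N)$-full graph $H$. Lemma \ref{lem: (k,d,N)} supplies such an $H$ with $N = \lceil 8^d \log k \rceil$, so $|V(H)| = k \lceil 8^d \log k \rceil$, which matches the target bound; Sopena's characterization will then give $\chi_o(\vec G) \leq |V(H)|$, and taking the maximum over orientations of $G$ will finish the argument.

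To build the homomorphism I would first fix a witnessing $2$-dipath coloring $\psi: V(\vec G) \to [k]$ and a $d$-degeneracy ordering $v_1, \dots, v_n$ of $G$, and then define $h: V(G) \to V(H)$ vertex by vertex while maintaining the invariant $h(v_i) \in P_{\psi(v_i)}$. At step $i$, let $u_1, \dots, u_t$ with $t \leq d$ be the earlier neighbors of $v_i$. Since every arc is a directed path of length $1$, $\psi$ is proper, so each $h(u_j)$ already lies in $\bigcup_{r \neq \psi(v_i)} P_r$. For each $j$ I set $q_j = +1$ if $v_i \to u_j$ in $\vec G$ and $q_j = -1$ otherwise; the task is then to pick $h(v_i) \in P_{\psi(v_i)}$ realizing the prescribed orientation toward each $h(u_j)$, which is precisely what the $(k,d,N)$-full property is designed to furnish.

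The main obstacle is that the $(k,d,N)$-full property accepts an ordered subset of $d$ \emph{distinct} vertices, whereas two earlier neighbors $u_j \neq u_{j'}$ of $v_i$ may have the same image in $H$. When this happens their common image lies in a single partite set, so $\psi(u_j) = \psi(u_{j'})$, and I must verify that the corresponding entries $q_j$ and $q_{j'}$ agree. This is exactly where the $2$-dipath hypothesis (rather than mere properness) is essential: were the two arcs at $v_i$ oriented in opposite directions, their concatenation through $v_i$ would produce a directed path of length $2$ between $u_j$ and $u_{j'}$, forcing distinct colors, a contradiction. The constraints therefore collapse to a consistent orientation request on at most $d$ distinct vertices, which can be padded to size exactly $d$ with arbitrary additional vertices from $\bigcup_{r \neq \psi(v_i)} P_r$ (abundantly available since $N \geq 2$ and there are $k - 1 \geq 4$ eligible partite sets).

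Feeding the padded request into Lemma \ref{lem: (k,d,N)} then yields a vertex $h(v_i) \in P_{\psi(v_i)}$ satisfying every orientation constraint, completing the inductive step. The resulting map is by construction an oriented homomorphism $\vec G \to H$, so $\chi_o(\vec G) \leq k \lceil 8^d \log k \rceil$; since this holds for every orientation of $G$, the same bound holds for $\chi_o(G)$.
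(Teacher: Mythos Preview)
Your proposal is correct and follows essentially the same route as the paper: fix a $2$-dipath coloring $\psi$ and a degeneracy ordering, then greedily build an oriented homomorphism into a $(k,d,N)$-full graph $H$ with each $v_i$ mapped into $P_{\psi(v_i)}$, using the $2$-dipath property to rule out conflicting orientation requests when two earlier neighbors share an image. If anything, your treatment of the collision case and the padding to size exactly $d$ is more explicit than the paper's, which leaves those points somewhat implicit.
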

\begin{proof}
    Consider some arbitrary orientation $\vec{G}$ of $E(G)$.
    We give $G$ a $d$-degeneracy ordering 
    $v_1,\dots,v_n$
    so that each vertex $v_i\in V(G)$ has at most $d$ neighbors $v_j$ for which $i > j$.
    We write $N = \lceil 8^d \log k \rceil$.
    We let $H$ be a $(k,d,N)$-full graph, which exists by Lemma \ref{lem: (k,d,N)}.
    We label the partite sets of $H$ as $P_1, \dots, P_k$.
    We will construct an oriented homomorphism $\phi:V(G) \rightarrow H$ 
    by choosing an image for the vertices of $G$ one at a time according to our degeneracy order of $V(G)$, which will show that $\chi_o(G) \leq |V(H)| = kN$.

    We construct $\phi$ as follows. 
    First, we fix a $2$-dipath coloring $\psi:V(G) \rightarrow \{1,\dots,k\}$ 
    of
    $\vec{G}$. 
    Then, we color the vertices of $G$ one at a time according to their degeneracy ordering $v_1, \dots, v_n$. 
    Each time we assign a color $\phi(v_{\ell})$ for a vertex $v_{\ell}$,
    we will require that $\phi(v_{\ell}) \in P_{\psi(v_{\ell})}$.
    Now, suppose we are considering a vertex $v_{\ell}$ and that a color $\phi(v_i) \in P_{\psi(v_i)}$ has already been assigned for each $1 \leq i < \ell$.
    We define $\phi(v_{\ell})$ as follows. Let $U = \{v_1, \dots, v_{\ell-1}\} \cap N(v_i)$. Fix some ordering $(u_1, \dots, u_t)$ of $U$, and let $q = F(U,v_{\ell},G)$.
    Given our assumption $\phi(v_i) \in P_{\psi(v_i)}$ 
    for each $1 \leq i < l$, and $\psi$ being a $2$-dipath coloring, if $u_i$ and $u_j$ have difference orientations to $v_{\ell}$, then $\psi(u_i) \neq \psi(u_j)$ implying $\phi(u_i) \neq \phi(u_j)$. Let $B = (\phi(u_1), \dots, \phi(u_t))$ be an ordered vertex subset of $V(H) \setminus P_{\psi(v_{\ell})}$ without repeated vertices. As we supposed that $\phi(v_i) \in P_{\psi(v_i)}$, it follows that that $B \subseteq V(H) \setminus P_{\psi(v_{\ell})}$. Hence, since $H$ is $(k,d,N)$-full, there exists a vertex
    $x \in P_{\psi(v_{\ell})}$ such that for all $i,j$ if $u_iv_{\ell},v_{\ell}u_j \in A(G)$, then $\phi(u_i)x,x\phi(u_j) \in A(H)$. We map $\phi(v) = x$. It is straightforward to check that $\phi$ is an oriented homomorphism, and hence the proof is complete.
\end{proof}

Now, we are ready to prove Theorem \ref{thm:g_improvement}.
We prove the following stronger theorem.

\begin{theorem}
    There exists a constant $C$ such that if $G$ is a graph of Euler genus $g \geq 0$ satisfying $\chi_2(G) = k$, then 
    \[\chi_o(G) <  C (k \log k + g + 1).\]
\end{theorem}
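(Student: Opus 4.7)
The plan is to mirror the peeling strategy from the proof of Theorem~\ref{thm:g_oriented_intro}. The key ingredient is Lemma~\ref{lem:XoX2}, which bounds $\chi_o$ in terms of $\chi_2$ for $d$-degenerate graphs, but with a factor of $8^d$. Since a graph of Euler genus $g$ can have degeneracy up to $\Theta(\sqrt g)$, applying Lemma~\ref{lem:XoX2} directly to $G$ would give a useless bound of roughly $k \cdot 8^{\Theta(\sqrt g)}$. To avoid this, I would delete a modest set of edges from $G$ to obtain a spanning subgraph $G'$ of constant degeneracy, and recover the lost edges via Lemma~\ref{lem:minusE}.

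Concretely, I would order $V(G) = \{v_1, \ldots, v_n\}$ by the smallest-last procedure used in the proof of Theorem~\ref{thm:g_oriented_intro}: for $i = n, n-1, \ldots, 1$, let $v_i$ be a minimum-degree vertex in $G \setminus \{v_{i+1}, \ldots, v_n\}$. I then set $V_1 = \{v_1, \ldots, v_{\min(n, 6g)}\}$ and $V_2 = V(G) \setminus V_1$. By Lemma~\ref{lem:bdd_n} applied to the subgraphs $G[v_1, \ldots, v_i]$ for $i > 6g$, each $v_i \in V_2$ has at most $6$ earlier neighbors. Next, define $G' := G \setminus E(G[V_1])$. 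Under the same ordering, every $v_i \in V_2$ still has at most $6$ earlier neighbors in $G'$, while every $v_i \in V_1$ has \emph{no} earlier neighbors in $G'$, because all of its earlier neighbors lie in $V_1$ and those edges have been deleted. Hence $G'$ is $6$-degenerate.

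I would then verify the easy inequality $\chi_2(G') \leq \chi_2(G) = k$ by extending any orientation of $G'$ arbitrarily to an orientation of $G$, taking a $k$-color $2$-dipath coloring, and restricting; directed paths in the oriented $G'$ persist in the extended oriented $G$, so the restriction remains a $2$-dipath coloring. Assuming $k \geq 5$, Lemma~\ref{lem:XoX2} with $d = 6$ then yields
\[
\chi_o(G') \leq k \lceil 8^6 \log k \rceil = O(k \log k),
\]
and Lemma~\ref{lem:minusE} gives $\chi_o(G) \leq |V_1| + \chi_o(G') \leq 6g + k \lceil 8^6 \log k \rceil$. Choosing $C$ large enough to absorb $8^6$ and the degenerate small-$k$ and small-$g$ cases (where the additive $+1$ provides constant slack) completes the proof.

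The real work is already hidden inside Lemma~\ref{lem:XoX2}; the only genuine subtlety here is verifying that deleting the edges \emph{inside} $V_1$ (rather than deleting the vertices of $V_1$) is compatible with the degeneracy ordering. The smallest-last ordering is precisely what makes this work: every vertex of $V_1$ loses all of its backward neighbors in passing from $G$ to $G'$, so the degeneracy of $G'$ is controlled entirely by the tail $V_2$, on which the uniform bound from Lemma~\ref{lem:bdd_n} applies. Once this is set up, the remainder is routine bookkeeping.
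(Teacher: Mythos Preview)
Your proposal is correct and follows essentially the same route as the paper: the smallest-last ordering, the split $V_1 = \{v_1,\dots,v_{6g}\}$, the passage to the $6$-degenerate spanning subgraph $G' = G \setminus E(G[V_1])$, the application of Lemma~\ref{lem:XoX2} with $d=6$, and the recovery via Lemma~\ref{lem:minusE} all match the paper's argument. The only cosmetic differences are in the handling of the boundary cases (the paper treats $n \le 6g$, $g \le 1$, and $\chi_2(G') < 5$ by separate ad hoc bounds, while you fold these into the choice of $C$ and the additive $+1$), and you make explicit the monotonicity $\chi_2(G') \le \chi_2(G)$ that the paper uses without comment.
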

\begin{proof}
    We will show that the constant $C = 2^{20}$ is sufficiently large.
    We write $n = |V(G)|$. If $n \leq 6g$, then $\chi_o(G) \leq 6g$.
    If $g \leq 1$, then we may write $a = \chi_a(G)$ and use the inequalities $a \leq 7$ \cite{AMS} and $\chi_o(G) \leq a 2^{a - 1} \leq 448$ \cite{raspaud1994good} to finish the proof.

    Otherwise, we assume that $g \geq 2$ and $n > 6g$.
    We order the vertices of $G$ as $v_1, \dots, v_n$ as in the proof of Theorem \ref{thm:g_intro} and Theorem \ref{thm:g_oriented_intro}, so that for each $i \in [n]$, $v_i$ is has minimum degree in the graph $G[\{v_1, \dots, v_i\}]$. 
    We write $V_1 = \{v_1, \dots, v_{6g}\}$ and $V_2 = \{v_{6g+1},\dots,v_n\}$.
    We define $G' = G \setminus E(G[V_1])$, and as in the proof of Theorem \ref{thm:g_oriented_intro}, $G'$ is a $6$-degenerate graph.

    If $\chi_2(G') < 5$, then 
     $\chi_o(G') \leq 8$
    by the inequality $\chi_o(G') \leq 2^{\chi_2(G') - 1}$ \cite{macgillivray2010injective}.
    Otherwise,
    by Lemma \ref{lem:XoX2}, $\chi_o(G') < \chi_2(G')\lceil 8^6 \log \chi_2(G')\rceil \leq k\lceil 8^6 \log k \rceil < C k \log k$.
    In both cases, by Lemma \ref{lem:minusE}, \[\chi_o(G) \leq |V_1| + \chi_o(G') < C(k\log k + g +1).\]
    This concludes the proof.
    \end{proof}

\section{Conclusion}


While our results give asymptotic bounds for the values $\chinj(G)$ and $\chi_o(G)$ of graphs $G$ with large Euler genus $g$, 
the optimal upper bounds for these parameters when $g$ is small are still unknown. For small values of $g$, estimates for $\chinj(G)$ and $\chi_o(G)$
rely on bounds in terms of acyclic chromatic number, namely $\chinj(G) \leq 3 \binom{\chi_a(G)}{2}$ and $\chi_o(G) \leq \chi_a(G) 2^{\chi_a(G) - 1}$.
For planar graphs $G$ in particular,  these upper bounds tell us that $\chinj(G) \leq 3 \binom{5}{2} = 30$, and $\chi_o(G) \leq 5 \cdot 2^{5-1} = 80$. We do not know whether these bounds are even close to being tight, 
and it is only known that there exist planar graphs $G$ satisfying $\chinj(G) \geq 18$ and $\chi_o(G) \geq 18$.
While Axenovich et al.~\cite{Axenovich} proved that the inequality $\chinj(G) \leq 3 \binom{\chi_a(G)}{2}$ is tight,
it is unknown if the inequality is tight for planar graphs. 
Even more confounding, 
it is unknown if the inequality 
$\chi_o(G) \leq \chi_a(G) 2^{\chi_a(G) - 1}$ is ever tight when $\chi_a(G) \geq 2$,
as Kostochka, Sopena, and Zhu \cite{kostochka1997acyclic} 
only give examples of graphs $G$ satisfying $\chi_o(G) \geq 2^{\chi_{a}(G) - 1} - 1$.
Therefore, we would like to emphasize the following questions:

\begin{question}
\label{q:ChinjXa}
    Does $\chinj(G) < 3\binom{\chi_a(G)}{2}$ hold for every planar graph $G$ with at least one edge? 
\end{question}

\begin{question}
\label{q:XoXa}
    Does $\chi_o(G) < \chi_a(G) 2^{\chi_a(G) - 1}$ hold for every graph $G$ with at least one edge?
\end{question}

For graphs $G$ of large Euler genus $g$,
there is a large gap between our general upper bound $\chi_o(G) \leq g^{6400}$ and
the lower bound $\chi_o(G) \geq g^{\frac{2}{3} - o(1)}$ achieved by our construction in Proposition \ref{prop: oriented_lower}.
Therefore,
it is natural to ask for the 
optimal exponent in the upper bound of 
$\chi_o(G)$:

\begin{question}
\label{conj: Linear_oriented_genus}
    What is the least $k$ such that $\chi_o(G) = O(g^k)$ for every graph $G$ of Euler genus $g$? 
\end{question}

One natural possibility is that $k = \frac{2}{3}$ is best possible.
Indeed, the random construction in Proposition \ref{prop: oriented_lower}
gives a graph of Euler genus $g$ with oriented chromatic number $\chi_o(G) = \Omega \left ( (g^2 / \log g)^{1/3} \right )$,
and similar random constructions give bounds for other coloring parameters, such as acyclic chromatic number \cite{AMR,AMS}, star chromatic number \cite{FRR}, and 
nonrepetitive chromatic number \cite{AlonNR}, which are tight up to some logarithmic factor.
We also observe that Theorem~\ref{thm:g_improvement} implies that if $\chi_2(G) = O(\frac{g}{\log g})$, then $k \leq 1$. To that end,
it is natural to ask if the $2$-dipath chromatic number is sublinear in the genus of a graph.

\begin{question}
\label{conj: SubLinear_2-dipath_genus}
    For every graph $G$ of  Euler genus $g$, is $\chi_2(G) = o(g)$?
\end{question}

Finally, one can ask if the upper bound for the oriented chromatic number in terms of the $2$-dipath chromatic number of a $d$-degenerate graph used in Lemma \ref{lem:XoX2} is best possible. Of particular interest to the authors is the following conjectured improvement.

\begin{conjecture}
 For every integer $d \geq 1$, there exists a constant $c$ depending on $d$ such that
 $\chi_o(G)\leq c\chi_2(G)$ for every $G$ with degeneracy at most $d$.
\end{conjecture}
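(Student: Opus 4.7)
The plan is to strengthen Lemma~\ref{lem:XoX2} by replacing the $\lceil 8^d \log k \rceil$ factor with a constant depending only on $d$. I would fix a 2-dipath coloring $\psi: V(G) \to \{1, \ldots, k\}$ of $G$ with $k = \chi_2(G)$, together with a $d$-degeneracy ordering $v_1, \ldots, v_n$, and then extend $\psi$ to an oriented coloring $\phi = (\psi, \alpha)$ by greedily selecting shades $\alpha(v_\ell) \in \{1, \ldots, c\}$ for some constant $c = c(d)$. Since $\psi$ is already proper, $\phi$ is automatically proper, so the only remaining requirement is to avoid digons in the quotient on $\{1, \ldots, k\} \times \{1, \ldots, c\}$.

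A key structural observation, which follows immediately from the 2-dipath condition, is that for any two $\psi$-classes $X_i$ and $X_j$, each vertex of $X_i$ is either a \emph{source} (all of its arcs to $X_j$ outgoing) or a \emph{sink} (all of its arcs from $X_j$ incoming), but never both; otherwise $w \to u \to v$ with $w, v \in X_j$ would violate 2-dipath at $u$. Hence the bipartite arcs between $X_i$ and $X_j$ decompose as $X_i^{+j} \to X_j^{-i}$ and $X_j^{+i} \to X_i^{-j}$, with $X_i^{+j} \cap X_i^{-j} = \emptyset$. When greedily shading $v_\ell$, each of the at most $d$ previously colored neighbors contributes a list of forbidden shades, namely those shades $a$ for which some already placed arc would close a digon in the quotient; the goal is to bound the total number of forbidden shades by $c - 1$, so that a legal shade always remains.

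The main obstacle is that a naive count gives $\Theta(c)$ forbidden shades per neighbor and hence $\Theta(cd)$ overall, which precludes any self-consistent choice of $c$. Overcoming this requires exploiting the 2-dipath structure more deeply. Three avenues seem most promising: (i) randomize the shade assignment and apply the Lov\'{a}sz Local Lemma, exploiting the fact that bad events only depend on shades within distance $2$ in $G$; (ii) replace the greedy procedure with an oriented homomorphism to an algebraically structured target, for instance a Cayley digraph on an abelian group of order $O(k)$ whose arc structure is engineered so that the extensions predicted by the 2-dipath coloring always exist; or (iii) induct on $d$, peeling off a vertex of minimum degree together with its neighborhood and using a refined version of Lemma~\ref{lem:minusE} to reduce to a $(d-1)$-degenerate instance. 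In every case the hard step is converting the \emph{local} 2-dipath structure into a \emph{global} bound on forbidden shades, and substantially new ideas beyond the $(k,d,N)$-full framework seem to be required, since that framework inherently loses a $\log k$ factor through the union bound over all possible $\psi$-color patterns at the $d$ previously colored neighbors.
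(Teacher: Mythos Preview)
The statement you are attempting to prove is stated in the paper as an open \emph{conjecture}; the authors do not prove it, and indeed they present it precisely because the $(k,d,N)$-full machinery of Lemma~\ref{lem: (k,d,N)} and Lemma~\ref{lem:XoX2} seems to lose an unavoidable $\log k$ factor. So there is no proof in the paper to compare your proposal against.

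As for the proposal itself, it is not a proof but a sketch of possible strategies, and you say as much in your final paragraph. Your structural observation about the bipartite arcs between two $\psi$-classes is correct and standard, but you correctly identify that the greedy shading approach fails because the number of forbidden shades at $v_\ell$ can be $\Theta(cd)$ rather than $O(d)$: the constraints come not from the $d$ back-neighbors themselves but from \emph{all} previously placed arcs between the relevant color-class pairs, and there is no a priori bound on that. None of your three proposed avenues (i)--(iii) comes with an argument for why it would succeed; in particular, the Local Lemma approach (i) faces the same difficulty, since the bad events for a digon in the quotient depend on \emph{all} vertices in the two involved $\psi$-classes, not just those within distance $2$ in $G$, so the dependency degree is not bounded by any function of $d$ alone. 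The algebraic-target idea (ii) is closest in spirit to what the paper suggests (``constructing small $(k,d,N)$-full graphs, or finding other graphs with similar properties''), but you give no construction. In short, your write-up is an honest assessment of why the problem is hard rather than a solution to it.
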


This would improve the bound given in Lemma~\ref{lem: (k,d,N)} by a logarithmic factor as well as improving bounds from \cite{clow2023oriented} and \cite{macgillivray2010injective} by an even larger margin. Such an improvement could be achieved by constructing small $(k,d,N)$-full graphs, or finding other graphs with similar properties. Such a improvement would remove a logarithmic factor from the upper bound in Theorem~\ref{thm:g_improvement} in the case that there exists graphs with $\chi_2(G) = \Omega(g)$.

\bibliographystyle{plain}
\bibliography{injbib}

\end{document}